\title{}
\author{}
\theoremstyle{definition}
\newtheorem{Theo}{Theorem}[section]
\newtheorem{Cor}[Theo]{Corollary}
\newtheorem{Prop}[Theo]{Proposition}
\theoremstyle{plain}
\DeclareMathOperator{\id}{id}
\newcommand{\overbar}[1]{\mkern 1.5mu\overline{\mkern-1.5mu#1\mkern-1.5mu}\mkern 1.5mu}
\renewcommand{\bar}{\overbar}
\renewcommand{\tilde}{\widetilde}
\begin{document}
	\author{Shanxiao Huang}
	\date{}
    \title{Coordinate Transformation in Faltings' Extension\footnote{This is author's master thesis in 2018.}}
    \maketitle
	\tableofcontents
	\vspace {25em}
	\pagebreak
	{ \ \ }\\
	\textbf{Abstract.}Analogue to Fontaine's computation for $\Omega_{\bar{\mathbb{Z}}_p/\mathbb{Z}_p}$, we compute the structure of $\Omega_{\mathcal{O}_{\bar{K}_0}/\mathcal{O}_{K_0}}$ (here $K_0$ is the completion of $\mathbb{Q}_p(T)$ at place $p$) and prove that $p^{1-1/p^n}\mathrm{d}p^{1/p^n}$, $T^{1-1/p^n}\mathrm{d}T^{1/p^n}$ and $S^{1-1/p^n}\mathrm{d}S^{1/p^n}$ are linearly dependent (Here $S := 1-T$). The main aim of this article is to find the linear equations for these three differential forms. Then we define a map which is called "differential version" of Fontaine's map to express the equations in a computable way. Finally, we prove that the coefficients in the equation can be expressed in some polynomial forms and compute some examples.\\
	
	\textbf{Keywords} Faltings purity theorem, perfectoid fields, Witt vectors, Fontaine's map.\\ 
	
	\setcounter{section}{-1}
	
	\section {Introduction}
	
	In \cite{Faltings}, Faltings computed the structure of $\Omega_{\bar{\mathbb{Z}}_p/\mathbb{Z}_p}$, namely
	$$\Omega_{\bar{\mathbb{Z}}_p/\mathbb{Z}_p}\cong (\mathbb{Q}_p/\mathbb{Z}_p)\otimes\bar{\mathbb{Z}}_p.$$
	It is interesting to consider a nontrivial generalization by introducing an additional variable. More precisely, let $K_0$ be the $p$-adic completion of $\mathbb{Q}_p(T)$ and $\bar{K}_0$ the algebraic closure of $K_0$. Our first goal is to compute $\Omega{\mathcal{O}_{\bar{K}_0}/\mathcal{O}_{K_0}}$. Conceptually, the pair $(\bar{\mathbb{Z}}_p, \mathbb{Z}_p)$ corresponds to a geometric object of dimension one. Adding one free variable transforms the pair $(\mathcal{O}_{\bar{K}_0}, \mathcal{O}_{K_0})$ into a dimension two setting. The following result is obtained:
	
	\begin{Prop} [Cor. \ref{Cor.KahlerDiff}]
		The map
		\begin{align*}
			\lambda:& (\mathbb{Q}_p/\mathbb{Z}_p)^{\oplus 2}\otimes \mathcal{O}_{\bar{K}_0}\rightarrow \Omega_{\mathcal{O}_{\bar{K}_0}/\mathcal{O}_{K_0}}\\
			&\frac{1}{p^m}\otimes \alpha +\frac{1}{p^n}\otimes \beta \mapsto \alpha p^{1-1/p^m}\mathrm{d}p^{1/p^m}+\beta T^{1-1/p^n}\mathrm{d}T^{1/p^n} 
		\end{align*}
		is an isomorphism.
	\end{Prop}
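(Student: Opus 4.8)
The plan is to show that $\lambda$ is well defined, surjective and injective, organising everything around the intermediate field $K_\infty:=K_0(p^{1/p^\infty},T^{1/p^\infty})$, whose $p$-adic completion $\widehat{K}_\infty$ is a perfectoid field; the two "coordinate directions" $p$ and $T$ will account for the two copies of $\mathbb{Q}_p/\mathbb{Z}_p$, and the further extensions $\bar K_0/K_\infty$ will contribute nothing new thanks to Faltings' almost purity. For well-definedness, since $(\mathbb{Q}_p/\mathbb{Z}_p)^{\oplus 2}\otimes\mathcal{O}_{\bar K_0}=\varinjlim_m(p^{-m}\mathbb{Z}_p/\mathbb{Z}_p)^{\oplus 2}\otimes\mathcal{O}_{\bar K_0}$ with transition maps multiplication by $p$, it suffices to check $p^{1-1/p^m}\mathrm{d}p^{1/p^m}=p\cdot p^{1-1/p^{m+1}}\mathrm{d}p^{1/p^{m+1}}$ and its $T$-analogue, which follow from the Leibniz rule applied to $p^{1/p^m}=(p^{1/p^{m+1}})^p$; read backwards, the same identity gives $\mathrm{d}p^{1/p^m}=p^{1/p^m}\cdot p^{1-1/p^{m+1}}\mathrm{d}p^{1/p^{m+1}}\in\im\lambda$, and likewise for $T$. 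I would also note at the outset that $\Omega_{\mathcal{O}_{\bar K_0}/\mathcal{O}_{K_0}}$ is $p$-divisible (from $\mathrm{d}x=p\,x^{(p-1)/p}\mathrm{d}x^{1/p}$) and torsion (because $K_0$ has characteristic $0$, so $\Omega_{L/K_0}=0$ for every finite subextension $L$).

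For surjectivity, the same $p$-divisibility argument applies to $\Omega_{\mathcal{O}_{\bar K_0}/\mathcal{O}_{K_\infty}}$, while Faltings' almost purity makes $\mathcal{O}_{\bar K_0}$ a filtered colimit of almost \'etale $\mathcal{O}_{K_\infty}$-algebras, so $\Omega_{\mathcal{O}_{\bar K_0}/\mathcal{O}_{K_\infty}}$ is almost zero; an almost zero $\mathcal{O}_{\bar K_0}$-module is killed by $p$, so a $p$-divisible one vanishes, i.e.\ $\Omega_{\mathcal{O}_{\bar K_0}/\mathcal{O}_{K_\infty}}=0$. The conormal sequence then forces $u\colon\Omega_{\mathcal{O}_{K_\infty}/\mathcal{O}_{K_0}}\otimes_{\mathcal{O}_{K_\infty}}\mathcal{O}_{\bar K_0}\to\Omega_{\mathcal{O}_{\bar K_0}/\mathcal{O}_{K_0}}$ to be onto. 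Since $\mathcal{O}_{K_n}=\mathcal{O}_{K_0}[X,Y]/(X^{p^n}-p,\,Y^{p^n}-T)$ (reducing mod $p$ the quotient is local Artinian with maximal ideal generated by $X$, so the ring is a discrete valuation ring with uniformizer $p^{1/p^n}$, hence integrally closed and equal to $\mathcal{O}_{K_n}$), the module $\Omega_{\mathcal{O}_{K_n}/\mathcal{O}_{K_0}}$ is generated by $\mathrm{d}p^{1/p^n}$ and $\mathrm{d}T^{1/p^n}$; passing to the colimit over $n$ and tensoring up, $\im u$ is the $\mathcal{O}_{\bar K_0}$-span of the $\mathrm{d}p^{1/p^m},\mathrm{d}T^{1/p^n}$, which lies in $\im\lambda$ by the first paragraph. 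Hence $\lambda$ is surjective.

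For injectivity, first identify $\Omega_{\mathcal{O}_{K_\infty}/\mathcal{O}_{K_0}}$: from the presentation above $\Omega_{\mathcal{O}_{K_n}/\mathcal{O}_{K_0}}=\mathcal{O}_{K_n}/(p^nX^{p^n-1})\cdot\mathrm{d}X\ \oplus\ \mathcal{O}_{K_n}/(p^nY^{p^n-1})\cdot\mathrm{d}Y$, the transition maps respect this splitting, and a short colimit computation (comparing annihilators and checking $p^{1-1/p^n}\mathrm{d}p^{1/p^n}\mapsto p\cdot p^{1-1/p^{n+1}}\mathrm{d}p^{1/p^{n+1}}$) identifies $\Omega_{\mathcal{O}_{K_\infty}/\mathcal{O}_{K_0}}$ with $(\mathbb{Q}_p/\mathbb{Z}_p)^{\oplus 2}\otimes_{\mathbb{Z}_p}\mathcal{O}_{K_\infty}$ so that $\tfrac1{p^m}\otimes\alpha$ on the first summand corresponds to $\alpha\,p^{1-1/p^m}\mathrm{d}p^{1/p^m}$, and likewise on the second with $T$. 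As $\mathcal{O}_{\bar K_0}$ is torsion-free, hence flat, over the valuation ring $\mathcal{O}_{K_\infty}$, tensoring turns this into $(\mathbb{Q}_p/\mathbb{Z}_p)^{\oplus 2}\otimes_{\mathbb{Z}_p}\mathcal{O}_{\bar K_0}$, and under these identifications $u$ becomes exactly $\lambda$; so it remains to show $\ker u=0$. The transitivity triangle for $\mathcal{O}_{K_0}\to\mathcal{O}_{K_\infty}\to\mathcal{O}_{\bar K_0}$ together with $\Omega_{\mathcal{O}_{\bar K_0}/\mathcal{O}_{K_\infty}}=0$ yields an exact sequence $H_1(L_{\mathcal{O}_{\bar K_0}/\mathcal{O}_{K_\infty}})\to\Omega_{\mathcal{O}_{K_\infty}/\mathcal{O}_{K_0}}\otimes\mathcal{O}_{\bar K_0}\xrightarrow{u}\Omega_{\mathcal{O}_{\bar K_0}/\mathcal{O}_{K_0}}\to0$, so $\ker u$ is a quotient of $H_1(L_{\mathcal{O}_{\bar K_0}/\mathcal{O}_{K_\infty}})$, which is almost zero since the cotangent complex of an ind-almost \'etale extension is almost zero. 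Being almost zero, $\ker u$ is killed by $p$, hence contained in the $p$-torsion $(\mathcal{O}_{\bar K_0}/p)^{\oplus 2}$ of the target; but $\mathcal{O}_{\bar K_0}/p$ has no nonzero almost zero submodule, since if $I/p\mathcal{O}_{\bar K_0}$ is killed by $\mathfrak{m}$ then $\mathfrak{m}I\subseteq p\mathcal{O}_{\bar K_0}$, and because $\mathfrak{m}$ contains elements of arbitrarily small valuation this forces $I\subseteq p\mathcal{O}_{\bar K_0}$. Therefore $\ker\lambda=\ker u=0$. (If an abstract computation of $\Omega_{\mathcal{O}_{\bar K_0}/\mathcal{O}_{K_0}}$ is already in hand, only the generator-chasing of the first two paragraphs is needed to see that $\lambda$ realises it.)

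The step I expect to be the genuine obstacle is this last passage from the perfectoid tower $K_\infty$ up to $\bar K_0$: it is only controlled \emph{almost}, so every vanishing one gets for free is an "almost" statement, and the real content is upgrading these to honest equalities --- here by playing the $p$-divisibility of $\Omega_{\mathcal{O}_{\bar K_0}/\mathcal{O}_{K_0}}$ against the fact that $\mathcal{O}_{\bar K_0}/p$ has no nonzero almost zero submodule. A secondary nuisance is that for a general finite $L/K_0$ the ring $\mathcal{O}_L$ is \emph{not} monogenic over $\mathcal{O}_{K_0}$ (the residue field $\mathbb{F}_p(T)$ is imperfect), so the naive "different $=$ Jacobian" description of $\Omega_{\mathcal{O}_L/\mathcal{O}_{K_0}}$ is unavailable, which is precisely why the argument runs through the explicitly presented rings $\mathcal{O}_{K_n}$ and through almost purity rather than through a direct computation of differents.
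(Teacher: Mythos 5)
Your proposal is correct in substance, but the key descent step is handled by a genuinely different mechanism than the paper's. You compute $\Omega_{\mathcal{O}_{K_\infty}/\mathcal{O}_{K_0}}$ exactly as the paper does (presentations $\mathcal{O}_{K_n}=\mathcal{O}_{K_0}[X,Y]/(X^{p^n}-p,Y^{p^n}-T)$, conormal sequence, colimit over $n$; cf.\ Lemma \ref{Lem.RingofIntegerStructure} and Prop.\ \ref{Prop.KahlerofUinf}), but for the passage from $K_\infty$ to $\bar K_0$ you invoke Faltings' almost purity as a black box: $\Omega_{\mathcal{O}_{\bar K_0}/\mathcal{O}_{K_\infty}}$ and $H_1(\mathbb{L}_{\mathcal{O}_{\bar K_0}/\mathcal{O}_{K_\infty}})$ are almost zero, and you upgrade ``almost zero'' to zero by playing $p$-divisibility (for surjectivity) and the absence of nonzero almost-zero submodules of $(\mathcal{O}_{\bar K_0}/p)^{\oplus 2}$ (for injectivity) against it --- both upgrades are correct. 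The paper never uses almost mathematics here: in Prop.\ \ref{Prop.KalherofUbar} it proves the needed vanishing $\varinjlim_n(\Omega_{W_n/U_n}\otimes W_\infty)=0$ by hand, at each finite level of the tower, using that $\Omega_{W_{n+1}/U_n}$ is generated by two elements (Prop.\ \ref{Prop.KahlerofRingofInteger}, $p$-rank $1$) and a snake-lemma argument showing each transition map kills the $p$-torsion, while injectivity comes for free from the finite-level short exact sequences, whose left-exactness rests on $H_1(\mathbb{L}_{W_n/U_n})=0$ for finite extensions of complete DVRs (Prop.\ \ref{Prop.SESofKahler}, proved in the Appendix), plus exactness of filtered colimits. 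So your route buys brevity and conceptual transparency at the price of a much heavier external input, whereas the paper is self-contained and in effect reproves the relevant special case of almost purity elementarily. One caveat you should not leave implicit: the entire weight of your argument rests on almost purity applying to this particular tower, so you ought to verify that the completion $\widehat K_\infty$ is indeed perfectoid (Frobenius surjective on $\mathcal{O}_{K_\infty}/p$, which uses perfectness of $\mathbb{F}_p$ to extract $p$-th roots of elements of $\mathbb{F}_p(T)$ after adjoining $T^{1/p}$) and say a word about transferring almost \'etaleness of $\mathcal{O}_L$, $L/K_\infty$ finite, between the uncompleted tower and its completion; as stated this is a citation-level gap rather than a mathematical one.
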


    One can observe that $(p^{1-1/p^n}\mathrm{d}p^{1/p^n},T^{1-1/p^n}\mathrm{d}T^{1/p^n})$ forms a basis of $\Omega_{\mathcal{O}_{\bar{K}_0}/\mathcal{O}_{K_0}}[p^n]$ (the $p^n$-torsion part of $\Omega_{\mathcal{O}_{\bar{K}_0}/\mathcal{O}_{K_0}}$) as free a $\mathcal{O}_{\bar{K}_0}/(p^n)$-module. Note that the automorphism 
    $$K_0\rightarrow K_0, T \mapsto 1-T$$
    can be extend to an automorphism of $\bar{K}_0$, which implies $1-T$ can replace $T$ in the proposition above. Thus the pair $(p^{1-1/p^n}\mathrm{d}p^{1/p^n},(1-T)^{1-1/p^n}\mathrm{d}(1-T)^{1/p^n})$ also forms a basis of $\Omega_{\mathcal{O}_{\bar{K}_0}/\mathcal{O}_{K_0}}[p^n]$. Consequently, the tuple $(p^{1-1/p^n}\mathrm{d}p^{1/p^n},T^{1-1/p^n}\mathrm{d}T^{1/p^n},(1-T)^{1-1/p^n}\mathrm{d}(1-T)^{1/p^n})$ are linearly dependent. Our next goal is to determine their linear equation for each $n$.\\
    
    Before stating the result, we can rephrase our problem more elegantly. Denote
    $$\mathbb{T}:= \varprojlim_{x\mapsto px} \Omega_{\mathcal{O}_{\bar{K}_0}/\mathcal{O}_{K_0}}[p^n],$$
    and denote
    $$\mathrm{d}_{\theta}[X^{\flat}] := (X^{1-1/p^n} \mathrm{d} X^{1/p^n} )_{n\in \mathbb{N}} \in \mathbb{T}$$
	for $X\in \mathcal{O}_{\bar{K}_0}$ (note that $pX^{1-1/p^n} \mathrm{d} X^{1/p^n} = X^{1-1/p^{n-1}} \mathrm{d} X^{1/p^{n-1}}$ and see section \ref{Sect.Fantain'sMap} for more details about the notations). Then one has
	\begin{align*}
		\mathcal{O}_{K}\oplus \mathcal{O}_{K} & \xrightarrow{\cong} \mathbb{T}\\
		(\alpha,\beta) & \mapsto \alpha \mathrm{d}_{\theta}[p^{\flat}] + \beta\mathrm{d}_{\theta}[T^{\flat}] \text{ (or } \alpha \mathrm{d}_{\theta}[p^{\flat}] + \beta\mathrm{d}_{\theta}[(1-T)^{\flat}]),
	\end{align*}
	here $K$ is the completion of $\bar{K}_0$.\\
	
	Now the tuple $(\mathrm{d}_{\theta}[p^{\flat}], \mathrm{d}_{\theta}[T^{\flat}], \mathrm{d}_{\theta}[(1-T)^{\flat}])$ is linearly dependent. Our goal is to find $\alpha,\beta,\gamma\in \mathcal{O}_{\bar{K}_0}$ such that the following equation holds 
	$$\alpha \mathrm{d}_{\theta}[p^{\flat}] + \beta \mathrm{d}_{\theta}[T^{\flat}] + \gamma \mathrm{d}_{\theta}[(1-T)^{\flat}]= 0$$

	We express these coefficients using Fontaine's map $\theta: A_{\mathrm{inf}} \rightarrow \mathcal{O}_K$ (the notation $A_{\mathrm{inf}}$ here is a little bit different from the usual one in $p$-adic Hodge theory as we add one free variable, see section \ref{Sect.CoeffFormula} for more details). Roughly speaking, $\theta$ is a homomorphism of $\mathbb{Z}_p$-algebra with principle kernel $([p^{\flat}]-p)$ and $\theta ([T^{\flat}] + [(1-T)^{\flat}] - 1) = 0$. Then there exists an element $A\in A_{\mathrm{inf}}$ such that
	$$ [T^{\flat}] + [(1-T)^{\flat}] - 1 = A ([p^{\flat}]-p),$$
	and the coordinate transformation formula is
	\begin{Prop}[Prop. \ref{Prop.ProofofFormula}]
		Let $\theta, A$ be as above, then one has
		$$ \mathrm{d}_{\theta}[T^{\flat}] + \mathrm{d}_{\theta}[(1-T)^{\flat}]= \theta(A) \mathrm{d}_{\theta}[p^{\flat}]$$
	\end{Prop}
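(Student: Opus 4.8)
The plan is to derive the formula by applying the differential Fontaine map $\mathrm{d}_{\theta}$ to the relation defining $A$, namely
$$[T^{\flat}] + [(1-T)^{\flat}] - 1 = A\,\xi, \qquad \xi := [p^{\flat}]-p.$$
The key input is that $\mathrm{d}_{\theta}$ extends to a derivation $\mathrm{d}_{\theta}\colon A_{\mathrm{inf}}\to\mathbb{T}$, where $\mathbb{T}$ is regarded as an $A_{\mathrm{inf}}$-module through $\theta$; explicitly, $\mathrm{d}_{\theta}$ is additive, it kills $p$ (as any derivation does), it satisfies the Leibniz rule $\mathrm{d}_{\theta}(xy)=\theta(x)\,\mathrm{d}_{\theta}(y)+\theta(y)\,\mathrm{d}_{\theta}(x)$, and on Teichm\"{u}ller lifts it agrees with the notation introduced earlier, $\mathrm{d}_{\theta}[X^{\flat}]=(X^{1-1/p^{n}}\mathrm{d}X^{1/p^{n}})_{n}$. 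If the preceding sections have not already established this, the first step is to do so: define $\mathrm{d}_{\theta}$ on Teichm\"{u}ller representatives by the explicit formula, set $\mathrm{d}_{\theta}(p)=0$, extend to a general element $\sum_{i}[c_{i}]p^{i}$ of $A_{\mathrm{inf}}$ by $\sum_{i}p^{i}\,\mathrm{d}_{\theta}[c_{i}^{\flat}]$ (the series converges because $\mathbb{T}\cong\mathcal{O}_{K}^{\oplus 2}$ is $p$-adically complete and separated), and then verify that the resulting map is additive and satisfies Leibniz.

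Granting this, the proof is a short computation. Applying $\mathrm{d}_{\theta}$ to the left-hand side of the relation and using additivity together with $\mathrm{d}_{\theta}(1)=0$ yields $\mathrm{d}_{\theta}[T^{\flat}]+\mathrm{d}_{\theta}[(1-T)^{\flat}]$. Applying $\mathrm{d}_{\theta}$ to the right-hand side and using the Leibniz rule yields
$$\mathrm{d}_{\theta}(A\,\xi)=\theta(A)\,\mathrm{d}_{\theta}(\xi)+\theta(\xi)\,\mathrm{d}_{\theta}(A).$$
Since $\xi$ generates $\ker\theta$, we have $\theta(\xi)=0$, so the second summand vanishes; note that we never need to know what $\mathrm{d}_{\theta}(A)$ is, only that it lies in $\mathbb{T}$. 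For the first summand, additivity and $\mathrm{d}_{\theta}(p)=0$ give $\mathrm{d}_{\theta}(\xi)=\mathrm{d}_{\theta}[p^{\flat}]$. Comparing the two sides gives $\mathrm{d}_{\theta}[T^{\flat}]+\mathrm{d}_{\theta}[(1-T)^{\flat}]=\theta(A)\,\mathrm{d}_{\theta}[p^{\flat}]$, as desired.

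The real work, and the main obstacle, is the construction of $\mathrm{d}_{\theta}$ as a genuine derivation on all of $A_{\mathrm{inf}}$. The Leibniz rule on Teichm\"{u}ller elements is routine: for $X,Y\in\mathcal{O}_{\bar{K}_0}$, writing $(XY)^{1/p^{n}}=X^{1/p^{n}}Y^{1/p^{n}}$, one checks at each level that
$$(XY)^{1-1/p^{n}}\mathrm{d}(XY)^{1/p^{n}}=Y\cdot X^{1-1/p^{n}}\mathrm{d}X^{1/p^{n}}+X\cdot Y^{1-1/p^{n}}\mathrm{d}Y^{1/p^{n}}$$
in $\Omega_{\mathcal{O}_{\bar{K}_0}/\mathcal{O}_{K_0}}[p^{n}]$, together with $X=\theta([X^{\flat}])$ and $Y=\theta([Y^{\flat}])$. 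The delicate point is additivity beyond Teichm\"{u}ller representatives: one cannot simply add the explicit formulas, since Teichm\"{u}ller lifts are not additive, and one must keep track of the Witt-vector carries. It is here that the $p$-adic completeness of $\mathbb{T}$ and the compatibility $p\cdot(X^{1-1/p^{n}}\mathrm{d}X^{1/p^{n}})=X^{1-1/p^{n-1}}\mathrm{d}X^{1/p^{n-1}}$ are used to make the limiting argument go through. The remaining ingredients — that $\xi=[p^{\flat}]-p$ generates $\ker\theta$, and that $\mathrm{d}_{\theta}$ kills $p$ (consistent with $\mathrm{d}p=0$ in $\Omega_{\mathcal{O}_{\bar{K}_0}/\mathcal{O}_{K_0}}$) — come with the construction of $\theta$ and are standard. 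Once $\mathrm{d}_{\theta}$ is available, the proposition follows at once.
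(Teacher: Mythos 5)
Your endgame is exactly the paper's proof: apply $\mathrm{d}_{\theta}$ to $[T^{\flat}]+[(1-T)^{\flat}]-1=A([p^{\flat}]-p)$, use the Leibniz rule $\mathrm{d}_{\theta}(xy)=\theta(x)\mathrm{d}_{\theta}(y)+\theta(y)\mathrm{d}_{\theta}(x)$, and kill the term $\theta([p^{\flat}]-p)\mathrm{d}_{\theta}(A)$ because $[p^{\flat}]-p\in\ker\theta$. You are also right that the entire weight of the argument rests on having $\mathrm{d}_{\theta}$ as a genuine derivation on $A_{\mathrm{inf}}$, and this is where your route differs from the paper's and where your sketch has a real gap. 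You propose to define $\mathrm{d}_{\theta}$ termwise on the Teichm\"{u}ller expansion $\sum_i[c_i]p^i$ and then ``verify'' additivity and Leibniz; but additivity of such a termwise formula is precisely the hard point (Teichm\"{u}ller lifts are not additive, and the carries contribute to every coordinate), and you defer it with a gesture at $p$-adic completeness rather than an argument. The paper avoids ever having to check additivity by hand: it sets $\mathrm{d}_{\theta}:=\theta_*\circ\sharp^{-1}\circ\mathrm{d}$, where $\mathrm{d}\colon A_{\mathrm{inf}}\to\Omega_{A_{\mathrm{inf}}/\mathbb{Z}_p}$ is the universal derivation and $\sharp^{-1}$ is the inverse of the isomorphism $\varprojlim_{\omega\mapsto p\omega}\Omega_{A_{\mathrm{inf}}/\mathbb{Z}_p}\to\Omega_{A_{\mathrm{inf}}/\mathbb{Z}_p}$. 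Being a composite of the universal derivation with module maps, $\mathrm{d}_{\theta}$ is automatically additive and satisfies Leibniz; the Witt-carry computation is isolated in the one place it is genuinely needed, namely the proof (Lemma 5.1) that $\Omega_{W(R)/\mathbb{Z}_p}$ is uniquely $p$-divisible for $R$ perfect, via the auxiliary derivation $\mathrm{d}'(x)=[x_0^{1/p}]^{p-1}\mathrm{d}[x_0^{1/p}]+\mathrm{d}(x_1^{1/p},x_2^{1/p},\dots)$ satisfying $p\mathrm{d}'=\mathrm{d}$. The explicit values $\mathrm{d}_{\theta}[X^{\flat}]=(X^{1-1/p^n}\mathrm{d}X^{1/p^n})_n$ are then a computation (Prop.~5.5), not a definition. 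If you want to keep your construction, you must actually prove the additivity identity, which amounts to redoing that lemma.

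One further point you omit: the paper's $\mathrm{d}_{\theta}$ naturally takes values in $\varprojlim_{\omega\mapsto p\omega}\Omega_{\mathcal{O}_K/\mathcal{O}_{K_0}}$ with $K$ the \emph{completion} of $\bar{K}_0$ (the components $c^{\sharp_n}$ of a general element of $\mathcal{O}_{K^{\flat}}$ are limits, hence live in $\mathcal{O}_K$), whereas the asserted identity is between forms in $\Omega_{\mathcal{O}_{\bar{K}_0}/\mathcal{O}_{K_0}}$. The paper closes this by the injectivity of $\Omega_{\mathcal{O}_{\bar{K}_0}/\mathcal{O}_{K_0}}\otimes\mathcal{O}_K\to\Omega_{\mathcal{O}_K/\mathcal{O}_{K_0}}$ (Lemma 5.6, resting on the cotangent-complex computation $H_1(\mathbb{L}_{\mathcal{O}_K/\mathcal{O}_{\bar{K}_0}})=0$ in the Appendix). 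Your proposal silently assumes $\mathrm{d}_{\theta}$ lands in $\mathbb{T}=\varprojlim\Omega_{\mathcal{O}_{\bar{K}_0}/\mathcal{O}_{K_0}}[p^n]$; that needs either this descent lemma or a construction confined to $\mathcal{O}_{\bar{K}_0}$, which the Teichm\"{u}ller-expansion approach does not give you for free.
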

    We develop Fontaione's method and construct a special derivation $$\mathrm{d}_{\theta}: A_{\mathrm{inf}} \rightarrow \Omega_{\mathcal{O}_K/\mathcal{O}_{K_0}} $$ called "differential version" of Fontaine's map, which is the key to prove the proposition above. On the other hand, This proposition solves our original problem. For each $n\in \mathbb{N}$, denote by $a_n$ the image of $\theta(A)$ in $\mathcal{O}_{\bar{K}_0}/(p^n)$, then
    $$ T^{1-1/p^n}\mathrm{d}T^{1/p^n} + (1-T)^{1-1/p^n}\mathrm{d}(1-T)^{1/p^n} = a_np^{1-1/p^n}\mathrm{d}p^{1/p^n}.$$
    
    Moreover, this expression is totally computable and one can find some examples for small $n$ in section \ref{Sect.Examples}. Those concrete computation indicate an surprising phenomenon. That is, one can choose $a_n \in \mathbb{Z}[p^{1/p^n},T^{1/p^n},(1-T)^{1/p^n}]$ even though the ring $\mathbb{Z}[p^{1/p^n},T^{1/p^n},(1-T)^{1/p^n}]$ is much smaller that $\mathcal{O}_{K_0(p^{1/p^n},T^{1/p^n},(1-T)^{p^n})}$. We prove this property by plugging in to the construction of Witt vector and analyzing the calculation process detailed.
    
    \begin{Prop} [Cor. \ref{Cor.Good}]
    	The exists a sequence $(a_1,a_2,\dots)$, where $a_n\in \mathbb{Z}[p^{1/p^{n-1}},T^{1/p^n},S^{1/p^n}]$ such that $$\theta(A) \equiv a_n(\text{mod}\ p^n)$$
    	for each $n\in\mathbb{N}^*$.
    \end{Prop}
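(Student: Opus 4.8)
Since the claim only involves $\theta(A)$ modulo $p^{n}$, the first step is to push the whole equation into the length-$n$ Witt vectors $A_{\mathrm{inf}}/p^{n}$. As $\theta$ is a ring homomorphism with $\theta(p^{n}x)=p^{n}\theta(x)$, it induces a ring homomorphism $\bar\theta_{n}\colon A_{\mathrm{inf}}/p^{n}\to\mathcal{O}_{\bar{K}_0}/p^{n}$; and $[p^{\flat}]-p$ remains a non--zero--divisor modulo $p^{n}$ (if $([p^{\flat}]-p)z\in p^{n}A_{\mathrm{inf}}$, then applying $\theta$ and using $\ker\theta=([p^{\flat}]-p)$ forces $z\in p^{n}A_{\mathrm{inf}}$), so $\bar A:=A\bmod p^{n}$ is the unique solution in $A_{\mathrm{inf}}/p^{n}$ of
\[ [T^{\flat}]+[S^{\flat}]-1=\bar A\,\bigl([p^{\flat}]-p\bigr). \]
Writing $\bar A=(A_{0},\dots,A_{n-1})$ in Teichm\"uller coordinates and applying Fontaine's congruence $x^{\#}\equiv\widehat{x_{m}}^{\,p^{m}}\pmod{p^{m+1}}$ — valid for any lift $\widehat{x_{m}}$ of the $m$-th Frobenius component $x_{m}$ of $x$ in the tilt — to $x=A_{i}^{1/p^{i}}$ with $m=n-1-i$, and using that the $m$-th component of $A_{i}^{1/p^{i}}$ is the $(n-1)$-st component of $A_{i}$, one obtains the clean identity
\[ \theta(A)\equiv\sum_{i=0}^{n-1}p^{i}\,\widehat{(A_{i})_{n-1}}^{\,p^{\,n-1-i}}\pmod{p^{n}}, \]
i.e.\ $\theta(A)\bmod p^{n}$ is the $(n-1)$-st Witt ghost polynomial evaluated on lifts of the $(n-1)$-st Frobenius components of the $A_{i}$. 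So it suffices to lift each $(A_{i})_{n-1}$ into $\mathbb{Z}[p^{1/p^{\bullet}},T^{1/p^{\bullet}},S^{1/p^{\bullet}}]$ with the exponents under control.

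\textbf{Running the division.} Next I would make the right-hand side explicit. With $p=V[1]$, the Witt coordinates of $[p^{\flat}]-p$ are universal integral polynomials in $p^{\flat}$ alone, and those of $[T^{\flat}]+[S^{\flat}]-1$ are universal integral polynomials (the Witt addition/subtraction polynomials) in $T^{\flat},S^{\flat}$; both sides thus lie in $W_{n}$ of $\mathbb{F}_{p}[T^{\flat},S^{\flat},p^{\flat}]$. Dividing coordinate by coordinate — using $[A_{0}][p^{\flat}]=[T^{\flat}+S^{\flat}-1]$ to rewrite $[S^{\flat}]=1-[T^{\flat}]+p[A_{0}]+(\text{Witt defect})$ and iterating — each step divides the current partial remainder by a power of $p^{\flat}$ in $\mathcal{O}_{\bar{K}_0}^{\flat}$ and applies a Witt correction. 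The coordinate $A_{i}$ itself is generally not a polynomial in $T^{\flat},S^{\flat},p^{\flat}$ (already $A_{0}=(T^{\flat}+S^{\flat}-1)/p^{\flat}$ is not), but its Frobenius components, computed via the \emph{additive} reduction $(\cdot)^{\#}\bmod p$ and Fontaine's congruence, are reductions of explicit expressions of the form $p^{-r}\bigl(\text{integral polynomial in }p^{1/p^{\bullet}},T^{1/p^{\bullet}},S^{1/p^{\bullet}}\bigr)^{p^{r}}$ — the divisibilities that make these integral being supplied by $\theta$ annihilating the numerator, so that every partial remainder is divisible in $\mathcal{O}_{\bar{K}_0}^{\flat}$ by exactly the power of $p^{\flat}$ demanded. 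Already for $i=0$ this yields $\theta(A)\equiv A_{0}^{\#}\equiv p^{-1}(T^{1/p}+S^{1/p}-1)^{p}\pmod{p}$, so one may take $a_{1}=p^{-1}(T^{1/p}+S^{1/p}-1)^{p}\in\mathbb{Z}[T^{1/p},S^{1/p}]$ (the $p$-divisibility of $(T^{1/p}+S^{1/p}-1)^{p}$ being visible from the multinomial expansion together with $T+S=1$).

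\textbf{Assembling and the hard part.} Substituting these lifts into $\sum_{i}p^{i}(\cdot)^{p^{n-1-i}}$, the $i$-th summand raises such a $p^{-r}(\text{polynomial in }p^{1/p^{d}},T^{1/p^{d}},S^{1/p^{d}})^{p^{r}}$-type expression to the power $p^{n-1-i}$ and multiplies by $p^{i}$; a careful count — in which the prefactor $p^{i}$ absorbs $i$ levels of $p$-power root — shows the exponents drop to $1/p^{n-1}$ for $p$ and to $1/p^{n}$ for $T,S$, exhibiting $\theta(A)\bmod p^{n}$ as the reduction of some $a_{n}\in\mathbb{Z}[p^{1/p^{n-1}},T^{1/p^{n}},S^{1/p^{n}}]$; and since the defining equation pins $\theta(A)$ down uniquely, the $a_{n}$'s from successive levels can be taken compatible. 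The genuine work — the ``detailed analysis of the calculation process'' — is exactly this exponent bookkeeping: a crude estimate lets the denominators grow with $n$ and only places $a_{n}$ in the much larger ring $\mathcal{O}_{K_{0}(p^{1/p^{n}},T^{1/p^{n}},S^{1/p^{n}})}$, and bringing them down to $\mathbb{Z}[p^{1/p^{n-1}},T^{1/p^{n}},S^{1/p^{n}}]$ requires using that each partial remainder is divisible in $\mathcal{O}_{\bar{K}_0}^{\flat}$ by \emph{exactly} — not merely at least — the demanded power of $p^{\flat}$, together with the cancellation of ``pure'' monomials in $p^{m}$-th powers modulo $p^{m+1}$. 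The non-additivity of $(\cdot)^{\#}$, which is what forces one to work modulo $p^{n}$ and to invoke Fontaine's congruence rather than evaluating $\theta$ term by term, is the only other subtlety, and it causes no trouble once the bookkeeping is done.
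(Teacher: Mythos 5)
Your overall strategy is the same as the paper's: expand $\theta(A)\bmod p^{n}$ via the Witt/ghost formula $\theta(A)\equiv\sum_{i}p^{i}\,\widehat{(A_i)_{n-1}}^{\,p^{n-1-i}}$ and then control in which ring the lifts of the Frobenius components of each Witt coordinate $A_i$ can be chosen. Your reduction to $A_{\mathrm{inf}}/p^{n}$ and the congruence in your first paragraph are correct, and the example $a_{1}=p^{-1}(T^{1/p}+S^{1/p}-1)^{p}$ matches the paper's.

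However, there is a genuine gap: the step you label ``a careful count shows the exponents drop to $1/p^{n-1}$ for $p$ and to $1/p^{n}$ for $T,S$'' is not carried out, and it is precisely the content of the proof — you yourself note that a crude estimate only lands $a_{n}$ in the large valuation ring. Two specific inputs are needed to make the count close, and neither appears in your proposal. First, a weighted homogeneity property of the Witt addition polynomials: $S_{n}(X_{0},X_{1}^{p},\dots,X_{n}^{p^{n}};Y_{0},\dots,Y_{n}^{p^{n}})$ is homogeneous of degree $p^{n}$. This forces every monomial $B_{0}^{u_{0}}\cdots B_{l}^{u_{l}}C_{1}^{v_{1}}\cdots C_{l-1}^{v_{l-1}}$ occurring in $(p^{\flat})^{p^{l}}A_{l}$ (where $C=pA$, so $C_{j}=A_{j-1}^{p}$) to satisfy $\sum_{j}p^{j}(u_{j}+v_{j})=p^{l}$, and it is this constraint that bounds the accumulated denominator $p^{t}$, $t=\sum_{j}jv_{j}p^{j-s}\le l$, so that $b/p^{t}$ stays in $\mathbb{Z}[p^{1/p^{s-1}}]$; without it the denominators of the $A_{i}^{\sharp_{s}}$ genuinely do grow out of control. (An induction is then needed to show $A_{l}^{\sharp_{s}}$ has denominator at most $p^{(l+1)p^{l}/p^{s}}$, i.e.\ the ``type'' bookkeeping of the paper.) Second, when you replace $B_{j}^{\sharp_{s}}$ and $C_{j}^{\sharp_{s}}$ by polynomial lifts inside the $p^{s-l}$-th power of a sum, you need a $p$-adic estimate on the multinomial coefficients, $v_{p}\bigl(\binom{p^{m}}{i_{1},\dots,i_{k}}\bigr)+v_{p}(i_{j})\ge m$, to guarantee the substitution error is still $O(p^{s})$ — the non-additivity of $(\cdot)^{\sharp}$ that you mention in passing is exactly where this is required, and it does cause trouble unless this estimate is proved. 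As written, your argument asserts the conclusion of the bookkeeping rather than performing it.
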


    In sections 1-2, we compute structure of $\Omega_{\mathcal{O}_{\bar{K}_0}/\mathcal{O}_{K_0}}$. In sections 3-5, we develop the "Differential version" of Fontaine's Map and use it to prove the coordinate transformation formula. In section 6, we show that the coefficients can be expressed in polynomial forms and provide some examples in section 7. Technical lemmas about cotangent complex are included in the Appendix.\\
    
    The author would like to express his deepest gratitude to his supervisor,  Prof. Peter Scholze, for his unwavering support and guidance for this paper. His patience, expertise, and encouragement have been invaluable in helping the author navigates the challenges of his research and complete this work. Prof. Peter Scholze's insightful feedback and constructive suggestions have greatly contributed to the quality of this paper. The author is profoundly grateful for his mentorship.\\

	\textbf{Notation.}In this article, $p$ is a fixed prime number, and $\mathbb{Q}_p$ denotes the $p$-adic completion of $\mathbb{Q}$, and  its discrete valuation ring is denoted by $\mathbb{Z}_p$. Denote by $\Omega_{B/A}$ the K\"{a}hler differential of a ring map $A\rightarrow B$. For an element $x$ in a field $k$, $x^{1/p^n}$ is inductively 	defined as a certain $p$-root of $x^{1/p^{n-1}}\in\bar{k}$ (i.e. $(x^{1/p^n})^p = x^{1/p^{n-1}}$).  A local field means a field which is complete under some discrete valuation.\\
	
	\section{The Structure of $\Omega_{\bar{\mathbb{Z}}_p/\mathbb{Z}_p}$}
	\Lemma\label{Lem.DirectLimitofKahler}
	Assume $M:\mathcal{I}\rightarrow A\text{-alg}$ is a filtered system of $A$-algebras with a colimit $B$. Then the natural morphism $\phi:\text{colim} (\Omega_{M(i)/A}\otimes_{M(i)} B) \rightarrow \Omega_{B/A}$ is an isomorphism.
	
	\begin{proof}
	Denote $A_i := M(i)$, the commutative diagram:
	\begin{equation*}\begin{tikzcd}
	A \arrow{rr}& & A_i \arrow{rd}{g_i}\arrow{rr}{f_i} & &A_{j}\arrow{ld}{g_{j}}\\
	& & &B
	\end{tikzcd}
	\end{equation*}
	induces the following commutative diagram:
	\begin{equation*}\begin{tikzcd}
	\Omega_{A_i/A}\otimes_{A_i}B \arrow{rd}{g_{i,\ast}}\arrow{rr}{f_{i,\ast}\otimes B} & &\Omega_{A_{j}/A}\otimes_{A_{j}}B\arrow{ld}{g_{j,\ast}}\\
	&\Omega_{B/A}
	\end{tikzcd}
	\end{equation*}
	(For $R\rightarrow S \xrightarrow{h} S'$, $h_\ast$ is the natural morphism from $\Omega_{S/R}\otimes_S S'$ to $\Omega_{S'/R}$, i.e. $h_*(\mathrm{d}x) = \mathrm{d}h(x)$. )\\
	So by the universal property of colimit, we have a unique morphism $\phi: \text{colim} (\Omega_{A_i/A}\otimes_{A_i} B)\rightarrow\Omega_{B/A}$ induced by all $g_{i,*}$.\\
	On the other hand, consider the natural map $\delta: B = \text{colim}(A_i) \xrightarrow{\mathrm{d}} \text{colim} (\Omega_{A_i/A}\otimes_{A_i} B)$. For all $b_1,b_2\in B$, there are $a_1,a_2\in A_i$ for some $i$, such that $g_i(a_t)=b_t$ for $t = 1,2$. Then $$\delta(b_1b_2)=[\mathrm{d}(a_1a_2)]=[a_2\mathrm{d}(a_1)]+[a_1\mathrm{d}(a_2)]$$ Hence $\delta$ is a $B$-derivation. From this, one gets a morphism $\varphi$ from $\Omega_{B/A}$ to $\text{colim} (\Omega_{A_n/A}\otimes_{A_n} B)$ such that $\delta = \varphi\mathrm{d}$. By the universal property of K\"{a}hler differential, $\phi\circ\varphi$ is the identity. Moreover, for all $\omega\in\text{colim} (\Omega_{A_i/A}\otimes_{A_i} B)$ can be expressed by an element $ w_i\in\Omega_{A_i/A}\otimes_{A_i} B$ which is generated by $\mathrm{d}a_i\in A_i$. Hence $\delta$ is a surjective as $B$-Module. It follows that $\varphi$ is surjective and therefore $\phi$ is an isomorphism.\\
\end{proof}
	\Lemma \label{Lem.Vn}
	Denote $V_n := \mathbb{Z}_p[p^{1/p^n}]$, and $\alpha_n: V_n\hookrightarrow V_{n+1}$, $p^{1/n} \mapsto (p^{1/n+1})^p$ the natural injection. Then $\Omega_{V_n/\mathbb{Z}_p} \cong V_n/(p^n\cdot p^{1-1/p^n})\mathrm{d}p^{1/p^n}$, and $\alpha_{n,\ast}(a\mathrm{d}p^{1/p^n})=p\cdot p^{(p-1)/p^{n+1}}\alpha_n(a)\mathrm{d}p^{1/p^{n+1}}$ for all $a\in V_n$.
	\begin{proof}
	The morphisms $\mathbb{Z}_p\rightarrow\mathbb{Z}_p[X]\xrightarrow{X\rightarrow p^{1/p^n}}\mathbb{Z}_p[p^{1/p^n}]$ induce an exact sequence:$$I/I^2\xrightarrow{\delta}\Omega_{\mathbb{Z}_p[X]/\mathbb{Z}_p}\otimes_{\mathbb{Z}_p[X]}\mathbb{Z}_p[p^{1/p^n}]\rightarrow\Omega_{\mathbb{Z}_p[p^{1/p^n}]/\mathbb{Z}_p}\rightarrow0$$ where $I = (X^{p^n}-p)$. As $\Omega_{\mathbb{Z}_p[X]/\mathbb{Z}_p}\otimes_{\mathbb{Z}_p}\mathbb{Z}_p[p^{1/p^n}] \cong \mathbb{Z}_p[p^{1/p^n}]\mathrm{d}X$, and $\delta(X^{p^n}-p)=p^np^{(p^n-1)/p^n}\mathrm{d}X$, we have $\Omega_{\mathbb{Z}_p[p^{1/p^n}]/\mathbb{Z}_p} \cong \mathbb{Z}_p[p^{1/p^n}]/(p^n\cdot p^{1-1/p^n})\mathrm{d}p^{1/p^n}$. From $\alpha_n(p^{1/p^n}) = (p^{1/p^{n+1}}) ^ p$, one gets  $$\alpha_{n,\ast}(a\mathrm{d}p^{1/p^n}) = \alpha_n(a)\mathrm{d}(p^{1/p^{n+1}})^p = p\cdot p^{(p-1)/p^{n+1}}\alpha_n(a)\mathrm{d}p^{1/p^{n+1}}.$$  
\end{proof}

	\Prop\label{Prop.Vinf}
	All $\alpha_n$ above form a direct system of rings, and we denote the direct limit as $V_{\infty} = \varinjlim V_n$. The following morphism between direct systems:
	\begin{equation*}
	\begin{tikzcd}
	\cdots \arrow {r}& \mathbb{Z}/p^n\mathbb{Z}\otimes V_{\infty} \arrow {d}{\nu_n}\arrow {r}{(\times p) \otimes \id}& \mathbb{Z}/p^{n+1}\mathbb{Z}\otimes V_{\infty} \arrow {d}{\nu_{n+1}}\arrow {r}&\cdots\\
	\cdots \arrow {r}& \Omega_{V_n/\mathbb{Z}_p}\otimes_{V_n} V_{\infty} \arrow {r}{\alpha_{n,\ast}\otimes \id }& \Omega_{V_{n+1}/\mathbb{Z}_p}\otimes_{V_{n+1}} V_{\infty} \arrow {r}&\cdots
	\end{tikzcd}
	\end{equation*}
	where $\nu_n(1\otimes 1)=p^{1-1/p^n}\mathrm{d}p^{1/p^n}\otimes 1$, induces an isomorphism between their direct limits
	$$\nu: \mathbb{Q}_p/\mathbb{Z}_p\otimes V_{\infty} \xrightarrow{\sim} \Omega_{V_{\infty}/\mathbb{Z}_p},\ \frac{1}{p^n}\otimes 1 \mapsto p^{1-\frac{1}{p^n}}\mathrm{d}p^{\frac{1}{p^n}}.$$

	\begin{proof}
		One has
		\begin{align*}
		(\alpha_{n,*}\otimes V_\infty)\circ\nu_n(1\otimes 1) & = (\alpha_{n,*}\otimes V_\infty)(p^{1-1/p^n}\mathrm{d}p^{1/p^n}\otimes 1)\\ 
		&= p\cdot p^{1-1/p^{n+1}}\mathrm{d}p^{1/p^{n+1}}\otimes 1\\
		&= p\cdot\nu_{n+1}(1\otimes 1)\\ &=\nu_{n+1}\circ ((\times p)\otimes\id)(1\otimes 1).
		\end{align*}
		Hence the diagram in the Proposition commutes, which means it is indeed a morphism between direct systems.\\
		From the two Lemmas above, we know the direct limit of the second line is isomorphic to $\Omega_{V_\infty/\mathbb{Z}_p}$. As for the direct limit of the first line, consider the morphism $s_n: \mathbb{Z}/p^n\mathbb{Z} \rightarrow \mathbb{Q}_p/\mathbb{Z}_p, 1 \mapsto 1/p^n$. It follows that  $s_n \otimes \id$ is injective as $s_n$ is injective and $V_\infty$ is a flat $\mathbb{Z}_p$-module (because $V_\infty$ is the direct limit of free modules $\mathbb{Z}_p[p^{1/p^n}]$). Hence $\varinjlim s_n: \varinjlim \mathbb{Z}/p^n\mathbb{Z}\otimes V_\infty \rightarrow \mathbb{Q}_p/\mathbb{Z}_p\otimes V_\infty$ is injective. Combined with $\cup \mathrm{Im}(s_n) = \mathbb{Q}_p/\mathbb{Z}_p$, one gets the direct limit of first line is $\mathbb{Q}_p/\mathbb{Z}_p\otimes V_\infty$.\\
		As  $\Omega_{V_n/\mathbb{Z}_p}\otimes_{V_n} V_{\infty}$ is generated by $\mathrm{d}p^{1/p^n}$ as a $V_\infty$-module, $\Omega_{V_\infty/\mathbb{Z}_p}$ is generated by $(\mathrm{d}p^{1/p^n})_{n\in\mathbb{N}}$.
		For all $n\in \mathbb{N}$ 
		\begin{align*}
        (\alpha_{n,\ast}\otimes\id) (\mathrm{d}p^{1/p^n}\otimes 1) &= p\cdot p^{(p-1)/p^{n+1}}\mathrm{d}p^{1/p^{n+1}} \otimes 1\\
         &= p^{1-1/p^{n+1}}\mathrm{d}p^{1/p^{n+1}}\otimes p^{1/p^n}\\
          &= \nu_{n+1}(1\otimes p^{1/p^n})
		\end{align*}
		which implies $\nu$ is surjective.\\
		Each $\nu_n$ is injective, which also comes from the flatness of $V_\infty$, by the exactness of direct limit, one gets that $\nu$ is injective. Finally, $\nu(1/p^n\otimes 1) = \nu_n(1\otimes 1) = p^{1-1/p^n}\mathrm{d}p^{1/p^n}$.
	\end{proof}
    \Lemma
    Let $k$ be a field with characteristic $p$, and assume $[k:k^p]<\infty$, here $k^p$ denotes the image of Frobenius embedding. Then $[k:k^p] =p^r$ for some $r$ ($r$ is called the $p$-rank of $k$), and for all finite extensions  $l/k$, the $p$-rank of $l$ is $r$ and $\Omega_{l/k}$ can be generated by at most $r$ elements.
    \begin{proof}
    If $k$ is a finite extension of $k^p$, one can choose $x_1,\dots,x_n\in k$ for some $n$ to generate $k$ as a $k^p-$algebra. As for all $x\in k$, $x^p\in k^p$, one gets $[k_{i+1}/k_{i}] = p$ or 1, where $k_i := k^p(x_1,\dots,x_i)$. Hence $[k:k^p]=p^r$ for some $r$. Then 
    consider the exact sequence:
    $$\Omega_{k(l^p)/k}\otimes l \rightarrow \Omega_{l/k} \rightarrow \Omega_{l/k(l^p)}\rightarrow 0$$
    It follows that $\Omega_{l/k} \cong \Omega_{l/k(l^p)}$ as the first arrow is zero map($\mathrm{d}x^p = px^{p-1}\mathrm{d}x =0$). For the reason that  $[l:l^p][l:k]=[l:l^p][l^p:k^p]=[l:k^p]=[l:k][k:k^p]$, one gets $[l:l^p]=[k:k^p]=r$. So $[l:k(l^p)]=p^s | p^r$, which means $l$ can be generated by at most $r$ elements as a $k(l^p)$-algebra. Hence $\Omega _{l/k}$ can be generated by at most $r$ elements.
    \end{proof}
\Prop \label{Prop.KahlerofRingofInteger}
Let $K$ be a local field with  residue field $k$ of characteristic $p$ whose $p$-rank is $r$, $L$ be a finite extension of $K$ with residue field $l$. Then $\Omega_{\mathcal{O}_L/\mathcal{O}_K}$ is generated by at most $r+1$ elements. Moreover, if $L/K$ is separable, $\Omega_{\mathcal{O}_L/\mathcal{O}_K}$ can be expressed as the form $\bigoplus_{i=1,\dots,n} \mathcal{O}_L/(\pi^{e_i})$ for some $n \leq r+1$ and $e_i\in\mathbb{N}$. Here $\pi$ is a uniformizer of $\mathcal{O}_L$.
\proof
Consider the following two exact sequences:\\
$$0=\Omega_{k/\mathcal{O}_K}\otimes l\rightarrow \Omega_{l/\mathcal{O}_K} \rightarrow \Omega_{l/k} \rightarrow 0$$
$$(\pi)/(\pi)^2\rightarrow \Omega_{\mathcal{O}_L/\mathcal{O}_K}\otimes l\rightarrow \Omega_{l/\mathcal{O}_K} \rightarrow 0 $$
One gets $n= \mathrm{dim}(\Omega_{\mathcal{O}_L/\mathcal{O}_K}\otimes l) \leq \mathrm{dim}(\Omega_{l/k})+1 \leq r+1$. As $\mathcal{O}_L$ is a discrete valuation ring, in particular is a prime ideal domain, by the structure theory of finitely generated modules over prime ideal domain, $\Omega_{\mathcal{O}_L/\mathcal{O}_K}= \bigoplus_{i=1,\dots,n} \mathcal{O}_L/(\pi^{e_i})\bigoplus \mathcal{O}_L^m$ for some $n,m,e_i\in\mathbb{N}$ with $n+m \leq r+1$.\\
If $L/K$ is separable, it is easy to see that $\Omega_{\mathcal{O}_L/\mathcal{O}_K}$ has no free part as for all $a\in\mathcal{O}_L$ with minimal polynomial $f(X)\in\mathcal{O}_K[X]$, $f'(a)\mathrm{d}a = \mathrm{d}f(a) = 0$. Hence $m = 0$.
\Lemma \label{Lem.RingofIntegerStructure}
Let $v$ be a discrete valuation of a field $L$, and $K$ is a subfield of $L$ such that $[L:K]<\infty$. We denote $l$ (resp. $k$) the residue field of $L$ (resp. $K$) w.r.t. $v$, then:
\item 1.If $[L:K]=[l:k]$ and $l = k(\bar{x})$ for some $x\in \mathcal{O}_L$, then $\mathcal{O}_L =\mathcal{O}_K[x]$.
\item 2.If $[L:K] = [v(\mathcal{O}_L):v(\mathcal{O}_K)]$ (i.e. $m_K \mathcal{O}_L=m_L^{[L:K]}$), then $\mathcal{O}_L = \mathcal{O}_K[\pi]$, where $m_L = (\pi)$.
\begin{proof}
	\item For case 1: Let $\pi$ be the uniformizer of $L$ (also for $K$ as $e(L|K) =1$). Then $$\mathcal{O}_L/(\pi) = l = k(\bar{x}) = \mathcal{O}_K[x]/(\pi).$$
	It follows from Nakayama Lemma.
	\item For case 2: the degree of $\pi$  is smaller or equal to $n = [L:K]$. But it should also be greater or equal to $n$ as for $a_i\in K$ which are not all 0, one has
	$$v(a_0+a_1 \pi+\dots+a_{n-1}\pi^{n-1}) = v(a_i\pi^i)$$
    where $i$ is the smallest subscript such that $a_i = \min\limits_{0\leq j<n}\{a_j\}$. From the equation about, we also know that if $a_0+a_1 \pi+\dots+a_{n-1}\pi^{n-1}$ is an integer, each $a_i$ should in $\mathcal{O}_K$. Hence $\mathcal{O}_L = \mathcal{O}_K[\pi]$.
\end{proof} 
\Prop\label{Prop.SESofKahler}
Let $U\subset V\subset W$ be a sequence of finite extensions of complete discrete valuation rings with charateristic 0, then the sequence
$$0\rightarrow \Omega_{V/U}\otimes W\rightarrow \Omega_{W/U} \rightarrow \Omega_{W/V}\rightarrow 0$$
is exact.
\begin{proof}
	By Cor 8.11, $H_1(\mathbb{L}_{W/V}) =0$. Hence the sequence is exact.
\end{proof}
\Prop\label{Prop.PerfectoidDescent} Denote by $\bar{V}$ the valuation ring of $\bar{\mathbb{Q}}_p$, then the natural morphism $$\Omega_{V_\infty/\mathbb{Z}_p}\otimes_{V_\infty}\bar{V} \rightarrow \Omega_{\bar{V}/\mathbb{Z}_p}$$
is an isomorphism.
\begin{proof}
	Denote $E_n := \mathbb{Q}_p(p^{1/p^n})$, and let $E_\infty$ be their direct limit. For any finite algebraic field extension $ F_\infty = E_\infty(x)$ of $E_\infty$, denote $F_n:=E_n(x) $. For all $n \in\mathbb{N}\cup\{\infty\}$, denote $V_n$ (resp. $W_n$) be the valuation ring of $E_m$ (resp. $F_m$). Notice that $\mathbb{Z}_p[p^{1/p^n}]$ is the valuation ring of $E_n$ by the case 2 of Lemma \ref{Lem.RingofIntegerStructure}, then the notations here are consistent with that in Lemma \ref{Lem.Vn} and Proposition \ref{Prop.Vinf}.\\
	Consider the following direct system of short exact sequences (Proposition 1.7, using $W_\infty$ is flat over $W_n$)
	\begin{equation*}
	\begin{tikzcd}
	0 \arrow {r}& \Omega_{V_n/V_0}\otimes W_\infty \arrow {d}\arrow {r}& \Omega_{W_n/V_0}\otimes W_\infty \arrow {d}\arrow {r}&\Omega_{W_n/V_n}\otimes W_\infty \arrow{r} \arrow{d}{\Phi_n} & 0\\
	0 \arrow {r}& \Omega_{V_{n+1}/V_0}\otimes W_\infty \arrow {r}& \Omega_{W_{n+1}/V_0}\otimes W_\infty \arrow {r}&\Omega_{W_{n+1}/V_{n+1}}\otimes W_\infty \arrow{r}  & 0
	\end{tikzcd}
	\end{equation*}
	Take the direct limit, and by Lemma \ref{Lem.DirectLimitofKahler}, the sequence comes to be 
	\begin{equation}
	0 \rightarrow \Omega_{V_\infty/V_0}\otimes W_\infty\rightarrow \Omega_{W_\infty/V_0} \rightarrow \varinjlim(\Omega_{W_n/V_n}\otimes W_\infty) \rightarrow 0 
	\end{equation}
	
	We are going to show that $\varinjlim(\Omega_{W_n/V_n}\otimes W_\infty) = 0$. Consider the following diagram
	\begin{equation*}
	\begin{tikzcd}[column sep=small]
	0\arrow {rd}& & & & 0\\
	& \Omega_{V_{n+1}/V_n} \otimes W_{n+1} \arrow {rd}{\alpha} & &\Omega_{W_{n+1}/W_n} \otimes W_{n+1} \arrow {ru}& \\
	& & \Omega_{W_{n+1}/V_n} \otimes W_{n+1} \arrow {rd}{\beta}\arrow {ru}& & \\
	& \Omega_{W_n/V_n} \otimes W_{n+1}\arrow {ru}{\gamma}\arrow {rr}{\phi_n}&  & \Omega_{W_{n+1}/V_{n+1}}  \arrow{rd}& \\
	0\arrow {ru}& & & & 0
	\end{tikzcd}
	\end{equation*}
	according to Proposition \ref{Prop.SESofKahler}, the two slash lines are short exact sequences, and the triangle in the diagram commutes, and $\phi_n \otimes id_{W_\infty} = \Phi_n$. As the residue fields of $V_n$ and $W_n$ above are finite fields, whose $p$-rank is 0. Hence all the modules above are generated by one element by Proposition \ref{Prop.KahlerofRingofInteger}. Suppose $ \Omega_{W_n/V_n}  = W_n/(p^{\delta_n}W_n) $, here we denote by $p^{\delta_n}$ some element with normalized valuation $\delta_n$(i.e. $v(p)=1$) for convenience. By direct computation, it is easy to show that:     $$\Omega_{V_{n+1}/V_n} \otimes W_{n+1} = W_{n+1}/(p^{1+\varepsilon_n}W_{n+1}), \varepsilon_n = (p-1)/p^n $$ 
	$$ \Omega_{W_{n+1}/V_n} \otimes W_{n+1} = W_{n+1}/(p^{\delta_{n+1}+1+\varepsilon_n}W_{n+1})$$
	$$\alpha =  p^{\delta_{n+1}}\times\_\_{\text{  (up to unit)}}$$
	$$\gamma =  p^{ \delta_{n+1}-\delta_{n}+1+\varepsilon_n}\times\_\_\text{  (up to unit})$$
	and $\beta$ is the canonical quotient map, so let $\omega_n\in\Omega_{W_n/V_n}$ be the basis (i.e. $\Omega_{W_n/V_n}= W_n \omega_n/(p^{\delta_n}\omega_n)$). Then up to unit, $\Phi_n(\omega_n) = p^{\delta_{n+1}-\delta_{n}+1+\varepsilon_n}\omega_{n+1}$. Then $\Phi_{n+k-1}\dots\Phi_n(\omega_n) = p^{\delta_{n+k}-\delta_n+k+\varepsilon}\omega_{n+k}$ is zero in $\Omega_{W_{n+k}/V_{n+k}}\otimes W_\infty$ for any natural number $ k>\delta_n$, where $\varepsilon = \varepsilon_n + \dots+ \varepsilon_{n+k-1}$. This implies that  $\varinjlim(\Omega_{W_n/V_n}\otimes W_\infty) = 0$. Hence when $F_\infty$ runs through all finite algebraic extensions  of $E_\infty = \mathbb{Q}_p(p^{1/p^\infty})$, one gets the isomorphism from taking the filtered colimit of sequence (1) and using Lemma \ref{Lem.DirectLimitofKahler} again:  $$\Omega_{V_\infty/\mathbb{Z}_p}\otimes_{V_\infty} \bar{V} \cong \Omega_{\bar{V}/\mathbb{Z}_p}.$$ 
\end{proof}
    \Cor
    Let $\bar{V}$ be as above, then the map
    $$\lambda: (\mathbb{Q}_p/\mathbb{Z}_p)\otimes\bar{V} \rightarrow\Omega_{\bar{V}/\mathbb{Z}_p}, \frac{1}{p^n}\otimes \alpha \mapsto \alpha\cdot p^{1-\frac{1}{p^n}}\mathrm{d} p^{\frac{1}{p^n}} $$ is an isomorphism. 
    \begin{proof}
    	According to Proposition \ref{Prop.Vinf}. $1/p^n \mapsto p^{1-1/p^n}\mathrm{d}p^{1/p^n}$ gives an isomorphism from $(\mathbb{Q}_p/\mathbb{Z}_p)\otimes V_\infty$ to $ \Omega_{V_\infty/\mathbb{Z}_p}$. Combined with Proposition \ref{Prop.PerfectoidDescent} , one gets the isomorphism by $-\otimes_{V_\infty}\bar{V}$.
    \end{proof}
    \section{Analogue for $\mathcal{O}_{K_0}$ }
    Now we extend the $p$-adic valuation to $\mathbb{Q}_p(T)$ with discrete valuation ring $\mathbb{Z}_p[T]_{(p)}$ (i.e. the localization of $\mathbb{Z}_p[T]$ at prime ideal $(p)$). Denote by $K_0$ the completion of $\mathbb{Q}_p(T)$ and $\bar{K}_0$ the algebraic closure of $K_0$. In this section, our goal is to compute the structure of $\Omega_{\mathcal{O}_{\bar{K}_0}/\mathcal{O}_{K_0}}$. Our tragedy is similar to the last section. Let $K_{\infty}$ be a perfectoid extension of $K_0$. We compute $\Omega_{\mathcal{O}_{k_{\infty}}/\mathcal{O}_{k_{0}}}$ and then prove $$\Omega_{\mathcal{O}_{k_{\infty}}/\mathcal{O}_{k_{0}}} \otimes \mathcal{O}_{\bar{K}_0} \cong \Omega_{\mathcal{O}_{\bar{K}_0}/\mathcal{O}_{k_{0}}}.$$ Some repeated computations will be omitted, which have been occurred in last section. 
    \Prop\label{Prop.pRankofK0}
    The $p$-rank of the residue field of $K_0$ is 1.
    \begin{proof}
    First, it is easy to see that the residue field $k_0$ of $\mathcal{O}_{K_0}$ is $\mathbb{F}_p(T)$. As $\mathbb{F}_p\subset (k_0)^p$, one gets $$\mathbb{F}_p(T) \subset (k_0)^p(T)\subset \mathbb{F}_p(T)$$ Hence the p-rank of $k_0$ is at most 1 (as $k_0 = k_0^p(T)$ and the degree of $T$ is at most p). On the other hand, $T\notin(k_0)^p$ as for all $f,g \in \mathbb{F}_p[T]$, $p|\mathrm{deg}(f^p/g^p) = p(\mathrm{deg} (f)-\mathrm{deg} (g))$. It follows that $k_0 \neq (k_0)^p$ and the p-rank of $k_0$ is not zero. Hence it is equal to 1.
    \end{proof}
    \Prop\label{Prop.KahlerofUinf}
    Let $U_n := \mathcal{O}_{K_0}[p^{1/p^n},T^{1/p^n}]$ and $U_\infty := \varinjlim U_n$. Then the map $$\mu:(\mathbb{Q}_p/\mathbb{Z}_p)^{\oplus 2}\otimes U_\infty\rightarrow \Omega_{U_\infty/\mathcal{O}_{K_0}},\ (\frac{1}{p^m}\otimes a,\frac{1}{p^n}\otimes b) \mapsto a\cdot p^{1-1/p^m}\mathrm{d}p^{1/p^m}+b\cdot T^{1-1/p^n}\mathrm{d}T^{1/p^n}$$
    is an isomorphism of $U_\infty$-modules.
    \begin{proof}
    	The morphisms $\mathcal{O}_{K_0}\rightarrow \mathcal{O}_{K_0}[X,Y] \xrightarrow{X\mapsto p^{1/p^n},Y\mapsto T^{1/p^n}}U_n$ induce an exact sequence:
    	$$ I/I^2\xrightarrow{\delta}\Omega_{\mathcal{O}_{K_0}[X,Y]/\mathcal{O}_{K_0}}\rightarrow\Omega_{U_n/\mathcal{O}_{K_0}}\rightarrow 0$$
    	where $I = (X^{p^n}-p,Y^{p^n}-T)$. Hence by a computation similar to Lemma 1.2, one gets $$\Omega_{U_n/\mathcal{O}_{K_0}} = U_n/(p^n\cdot p^{1-1/p^n})\mathrm{d}p^{1/p^n}\oplus U_n/(p^n)\mathrm{d}T^{1/p^n}$$
    	Consider the following morphism between direct systems:
    	\begin{equation*}
    	\begin{tikzcd}
    	\cdots \arrow {r}& (\mathbb{Z}/p^n\mathbb{Z})^{\oplus 2}\otimes U_{\infty} \arrow {d}{\mu_n}\arrow {r}{(\times p) \otimes id}& (\mathbb{Z}/p^{n+1}\mathbb{Z})^{\oplus 2}\otimes U_{\infty} \arrow {d}{\mu_{n+1}}\arrow {r}&\cdots\\
    	\cdots \arrow {r}& \Omega_{U_n/\mathcal{O}_{K_0}}\otimes_{U_n} U_{\infty} \arrow {r}{\beta_{n,\ast}\otimes id }& \Omega_{U_{n+1}/\mathcal{O}_{K_0}}\otimes_{U_{n+1}} U_{\infty} \arrow {r}&\cdots
    	\end{tikzcd}
    	\end{equation*}
    	where $\mu_n((a,b)) = p^{1-1/p^n}\mathrm{d}p^{1/p^n}\otimes a+ T^{1-p^n}\mathrm{d}T^{1/p^n}\otimes b$, and $\beta_{n,*}$ is induced by the natural injection $\beta_n: U_n \hookrightarrow U_{n+1}$. We also need to check the diagram commutes:
    	\begin{align*}
    	(\beta_{n,*}\otimes id)\cdot \mu_n((a,b)) & = (\beta_{n,*}\otimes id)(p^{1-1/p^n}\mathrm{d}p^{1/p^n}\otimes a+ T^{1-p^n}\mathrm{d}T^{1/p^n}\otimes b)\\
    	&=
    	p\cdot p^{1-1/p^{n+1}}\mathrm{d}p^{1/p^{n+1}}\otimes a+ p\cdot T^{1-p^{n+1}}\mathrm{d}T^{1/p^{n+1}}\otimes b\\
    	&=
    	\mu_{n+1}(p(a,b)))
    	\end{align*}
    As we have proved that $\varinjlim \mathbb{Z}/p^n\mathbb{Z}=\mathbb{Q}_p/\mathbb{Z}_p$ in Proposition \ref{Prop.Vinf}, the direct limit of the first line is $(\mathbb{Q}_p/\mathbb{Z}_p)^{\oplus 2}\otimes U_\infty$. And the direct limit of second line is $\Omega_{U_\infty/\mathcal{O}_{K_0}}$. And $\mu$ is induced by the maps $(1/p^n,0) \mapsto p^{1-1/p^n}\mathrm{d}p^{1/p^n}$ and $(0,1/p^n) \mapsto T^{1-1/p^n}\mathrm{d}T^{1/p^n}$. \\
    As each $\mu_n$ is injective, $\mu$ is also injective. In addition, $$\mathrm{d}T^{1/p^n}\otimes 1 = \mu_n((0,1)\otimes T^{1/p^n-1})$$ and $$(\beta_{n,*}\otimes id)(\mathrm{d}p^{1/p^n}\otimes 1) = p\cdot p^{(p-1)/p^{n+1}}\mathrm{d}p^{1/p^{n+1}}=\mu_{n+1}((p^{1/n},0))$$ Hence it is also surjective.
    \end{proof}
    \Prop\label{Prop.KalherofUbar}
    $U_\infty$ is the valuation ring of $K_0(p^{1/p^\infty},T^{1/p^\infty}):= K_0(p^{1/p},p^{1/p^2},\cdots;T^{1/p},T^{1/p^2},\cdots)$, and let $\bar{U}$ be the  valuation ring of $\bar{K}_0$. The natural morphsim:
    $$\Omega_{U_\infty/\mathcal{O}_{K_0}}\otimes\bar{U} \rightarrow \Omega_{\bar{U}/\mathcal{O}_{K_0}}$$
    is an isomorphism. 
    \begin{proof}
    Denote $E_n := K_0(p^{1/p^n},T^{1/p^n})$.
    According to Lemma \ref{Lem.RingofIntegerStructure}, $\mathcal{O}_{K_0(T^{1/p^n})} =\mathcal{O}_{K_0}[T^{1/p^n}] $(case 1) and $\mathcal{O}_{E_n} = \mathcal{O}_{K_0(T^{1/p^n})}[p^{1/p^n}]$(case 2). Hence $U_n :=\mathcal{O}_{K_0}[P^{1/p^n},T^{1/p^n}] = \mathcal{O}_{E_n}$. It follows that $U_\infty = \mathcal{O}_{E_\infty}$.
    Let $F_\infty := E_\infty(x)$ be some finite extension of $E_\infty$, denote $F_n := E_n(x)$, and denote $W_n:= \mathcal{O}_{F_n}$, $W_\infty:= \mathcal{O}_{F_\infty}$.\\
    Take the direct limit of the following exact sequence:
    \begin{equation*}
0\rightarrow \Omega_{U_n/\mathcal{O}_{K_0}}\otimes W_\infty \rightarrow\Omega_{W_n/\mathcal{O}_{K_0}}\otimes W_\infty\rightarrow\Omega_{W_n/U_n}\otimes W_\infty\rightarrow0
    \end{equation*}
    (The exactness comes from Proposition 1.7 and $W_\infty$ is a flat $W_n$-module). One will get the short exact sequence:
    \begin{equation}
 0\rightarrow \Omega_{U_\infty/\mathcal{O}_{K_0}}\otimes W_\infty \rightarrow\Omega_{W_\infty/\mathcal{O}_{K_0}}\rightarrow\varinjlim_n(\Omega_{W_n/U_n}\otimes W_\infty)\rightarrow0 
    \end{equation}
    Now we are going to prove that $\varinjlim(\Omega_{W_n/U_n}\otimes W_\infty) = 0$. Consider the following short exact sequence:
    $$0\rightarrow\Omega_{U_{n+1}/U_n}\otimes_{U_{n+1}} W_{n+1}\cong W_{n+1}/(p^{1+(p-1)/p^{n+1}})\oplus W_{n+1}/(p)\xrightarrow{f_n}\Omega_{W_{n+1}/U_n}\xrightarrow{g_n}\Omega_{W_{n+1}/U_{n+1}}\rightarrow0$$
    By the snake Lemma ($\times p$ to above sequence), $\Omega_{U_{n+1}/U_n}\otimes_{U_{n+1}} W_{n+1}[p] = (W_{n+1}/(p))^{\oplus 2}\hookrightarrow \Omega_{W_{n+1}/U_n}[p]$. As $W_{n+1}/U_n$, $U_n/\mathcal{O}_{K_0}$ are finite extension, by Proposition \ref{Prop.KahlerofRingofInteger} and Proposition \ref{Prop.pRankofK0}, $\Omega_{W_{n+1}/U_n}$ is generated by $2$ elements. Consider the following exact sequence:
    $$0\rightarrow W_{n+1}^{\oplus 2}\xrightarrow{(\_\times a,\_\times b)} W_{n+1}^{\oplus 2}\rightarrow\Omega_{W_{n+1}/U_n}\rightarrow0.$$
    $\times p$ to the sequence and use the snake Lemma again, one gets an injection: $\Omega_{W_{n+1}/U_n}[p]\hookrightarrow (W_{n+1}/(p))^{\oplus 2}$. Hence one gets an injection :$(W_{n+1}/(p))^{\oplus 2}\hookrightarrow (W_{n+1}/(p))^{\oplus 2}$, which is easy to know that is bijection. It follows that $\mathrm{ker}(g_n) = \mathrm{Im} (f_n) \supseteq \Omega_{W_{n+1}/U_n}[p]$.\\
    Notice that the natural morphism $\Omega_{W_n/U_n}\otimes W_\infty \xrightarrow{h_n} \Omega_{W_{n+1}/U_{n+1}}\otimes W_\infty$ comes from the natural morphism $\Omega_{W_n/U_n}\otimes W_\infty \rightarrow \Omega_{W_{n+1}/U_n}\otimes W_\infty$ composed with $g_n$. Then $\Omega_{W_n/U_n}\otimes W_\infty[p] \subseteq \mathrm{ker}(h_n)$. Hence for $k$ large enough, $\Omega_{W_n/U_n}\otimes W_\infty = \Omega_{W_n/U_n}\otimes W_\infty[p^k] \subseteq \mathrm{ker}(h_n\circ\dots\circ h_{n+k-1})$. It follows that $\varinjlim(\Omega_{W_n/U_n}\otimes W_\infty) = 0$.
    Then when $F_\infty$ runs through all finite extension of $E_\infty$ and take the filtered colimit of short exact sequence (2), one gets: $$\Omega_{U_\infty/\mathcal{O}_{K_0}}\otimes\bar{U} \xrightarrow{\sim} \Omega_{\bar{U}/\mathcal{O}_{K_0}}.$$
    \end{proof}
    
    \begin{Cor}\label{Cor.KahlerDiff}
    	Let $\bar{U}$ be as above, then the map:
    	\begin{align*}
    		\lambda:& (\mathbb{Q}_p/\mathbb{Z}_p)^{\oplus 2}\otimes \bar{U}\rightarrow \Omega_{\bar{U}/\mathcal{O}_{K_0}}\\
    		&\frac{1}{p^m}\otimes \alpha +\frac{1}{p^n}\otimes \beta \mapsto \alpha p^{1-1/p^m}\mathrm{d}p^{1/p^m}+\beta T^{1-1/p^n}\mathrm{d}T^{1/p^n} 
    	\end{align*}
    	is an isomorphism.
    	
    \end{Cor}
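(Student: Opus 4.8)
The plan is to obtain $\lambda$ as the composite of two isomorphisms already at hand: the structural description $\mu$ of $\Omega_{U_\infty/\mathcal{O}_{K_0}}$ from Proposition \ref{Prop.KahlerofUinf}, base-changed along $U_\infty\to\bar U$, followed by the descent isomorphism $\Omega_{U_\infty/\mathcal{O}_{K_0}}\otimes_{U_\infty}\bar U \xrightarrow{\sim}\Omega_{\bar U/\mathcal{O}_{K_0}}$ of Proposition \ref{Prop.KalherofUbar}. Concretely, I would first apply the functor $-\otimes_{U_\infty}\bar U$ to the isomorphism $\mu:(\mathbb{Q}_p/\mathbb{Z}_p)^{\oplus 2}\otimes_{\mathbb{Z}_p} U_\infty \xrightarrow{\sim}\Omega_{U_\infty/\mathcal{O}_{K_0}}$ (any functor sends an isomorphism to an isomorphism, so no flatness is needed here), and then identify the source using the canonical isomorphism $\bigl((\mathbb{Q}_p/\mathbb{Z}_p)^{\oplus 2}\otimes_{\mathbb{Z}_p} U_\infty\bigr)\otimes_{U_\infty}\bar U \cong (\mathbb{Q}_p/\mathbb{Z}_p)^{\oplus 2}\otimes_{\mathbb{Z}_p}\bar U$, which holds since tensor products compose and $-\otimes\bar U$ commutes with finite direct sums. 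This gives an isomorphism $(\mathbb{Q}_p/\mathbb{Z}_p)^{\oplus 2}\otimes\bar U \xrightarrow{\sim}\Omega_{U_\infty/\mathcal{O}_{K_0}}\otimes_{U_\infty}\bar U$.

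Next I would compose with the isomorphism of Proposition \ref{Prop.KalherofUbar}, and check that the resulting composite is exactly the map $\lambda$ in the statement. This is a chase on generators: by Proposition \ref{Prop.KahlerofUinf}, $\mu$ sends $(\tfrac{1}{p^m}\otimes 1,0)$ to $p^{1-1/p^m}\mathrm{d}p^{1/p^m}$ and $(0,\tfrac{1}{p^n}\otimes 1)$ to $T^{1-1/p^n}\mathrm{d}T^{1/p^n}$; after applying $-\otimes_{U_\infty}\bar U$, the element $\tfrac{1}{p^m}\otimes\alpha$ maps to $\alpha\, p^{1-1/p^m}\mathrm{d}p^{1/p^m}\otimes 1$, and the descent morphism of Proposition \ref{Prop.KalherofUbar} is the natural one, sending $\omega\otimes 1\mapsto\omega$. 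Hence $\tfrac{1}{p^m}\otimes\alpha$ maps to $\alpha\, p^{1-1/p^m}\mathrm{d}p^{1/p^m}$ in $\Omega_{\bar U/\mathcal{O}_{K_0}}$, and similarly $\tfrac{1}{p^n}\otimes\beta\mapsto\beta\,T^{1-1/p^n}\mathrm{d}T^{1/p^n}$. Since such elements generate $(\mathbb{Q}_p/\mathbb{Z}_p)^{\oplus 2}\otimes\bar U$, the composite agrees with $\lambda$, so $\lambda$ is an isomorphism.

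There is essentially no obstacle beyond this bookkeeping; all the substance was carried out in Propositions \ref{Prop.KahlerofUinf} and \ref{Prop.KalherofUbar}. The one point warranting a word of care is the identification of the base change of the torsion part: writing $\mathbb{Q}_p/\mathbb{Z}_p = \varinjlim_n \mathbb{Z}/p^n\mathbb{Z}$ and using that base change commutes with filtered colimits and finite direct sums, one sees $(\mathbb{Q}_p/\mathbb{Z}_p)^{\oplus 2}\otimes_{\mathbb{Z}_p} U_\infty\otimes_{U_\infty}\bar U$ is canonically $(\mathbb{Q}_p/\mathbb{Z}_p)^{\oplus 2}\otimes_{\mathbb{Z}_p}\bar U$ with the generator $\tfrac{1}{p^m}\otimes\alpha$ behaving as expected — this is precisely the step used in the analogous corollary for $\bar V$ following Proposition \ref{Prop.PerfectoidDescent}.
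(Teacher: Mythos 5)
Your proposal is correct and is exactly the paper's argument: the paper's proof simply cites Propositions \ref{Prop.KahlerofUinf} and \ref{Prop.KalherofUbar} and applies $-\otimes_{U_\infty}\bar{U}$, which is what you carry out, just with the generator-chasing and the identification of the source module written out explicitly.
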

    \begin{proof}
    	This is a direct corollary from Proposition \ref{Prop.KahlerofUinf} and Proposition \ref{Prop.KalherofUbar}.
    \end{proof}
    \section{The problem of coordinate transformation}
    \Prop
    Let $S$ be an element in $\mathcal{O}_{K_0}$ such that $\bar{S} \in \mathbb{F}_p(T) \backslash (\mathbb{F}_p(T))^p$, then the map
    \begin{align*}
    \lambda_S:& (\mathbb{Q}_p/\mathbb{Z}_p)^{\oplus 2}\otimes \bar{U}\rightarrow \Omega_{\bar{U}/\mathcal{O}_{K_0}}\\
    &\frac{1}{p^m}\otimes \alpha +\frac{1}{p^n}\otimes \beta \mapsto \alpha p^{1-1/p^m}\mathrm{d}p^{1/p^m}+\beta S^{1-1/p^n}\mathrm{d}S^{1/p^n} 
    \end{align*}
    is an isomorphism.
    \begin{proof}
    	For the same reason as we have done, if denote $U_{S,n} := \mathcal{O}_{K_0(p^{1/p^n},S^{1/p^n})} =\mathcal{O}_{K_0}[p^{1/p^n},S^{1/p^n}]$ and $U_{S,\infty}$ as their direct limit, one gets an isomorphism 
    	$$(\mathbb{Q}_p/\mathbb{Z}_p)^{\oplus 2}\otimes U_{S,\infty}\rightarrow \Omega_{U_{S,\infty}/\mathcal{O}_{K_0}}, \frac{1}{p^m}\otimes \alpha +\frac{1}{p^n}\otimes \beta \mapsto \alpha p^{1-1/p^m}\mathrm{d}p^{1/p^m}+\beta S^{1-1/p^n}\mathrm{d}S^{1/p^n} $$
    	And repeats the proof of Proposition \ref{Prop.KalherofUbar}, one gets the isomorphsim.
    \end{proof}

By the Proposition above, $(p^{1-1/p^n}\mathrm{d}p^{1/p^n}, T^{1-1/p^n}\mathrm{d}T^{1/p^n})$ and $(p^{1-1/p^n}\mathrm{d}p^{1/p^n}, S^{1-1/p^n}\mathrm{d}S^{1/p^n})$ both make up a basis of $\Omega_{\bar{U}/\mathcal{O}_{K_0}}[p^n]$ as a $\bar{U}/(p^n)$-module. Hence there must exist some formula correspond to the transformation for these coordinates. More precisely, there exists $a\in \bar{U}/(p^n),b,c\in(\bar{U}/(p^n))^\times$ depending on $n,S$, such that:
$$ap^{1-1/p^n}\mathrm{d}p^{1/p^n}+bT^{1-1/p^n}\mathrm{d}T^{1/p^n}+cS^{1-1/p^n}\mathrm{d}S^{1/p^n}=0$$
The main goal of this article is to find such formula for general $p,n$ in the case of $S+T=1$.
    \section{The formula for the coordinate transformation} \label{Sect.CoeffFormula}
    In this section, we recall the construction of Witt vectors and Fontaine's map. Then we will use these constructions to give the formula for the coordinate transformation.
    \Lemma
    Fix a prime number $p$. For all $n\in \mathbb{N}$, let $W_n(X_0,\dots,X_n)=\sum_{k=0}^n p^kX_k^{p^{n-k}} $. Then for every polynomial function $\Phi\in \mathbb{Z}[X,Y]$, there exist a unique sequence $(\phi_0,\dots,\phi_n,\dots)$ of elements of $\mathbb{Z}[X_0,\dots,X_n,\dots;Y_0,\dots,Y_n,\dots]$, such that 
    $$W_n(\phi_0,\dots,\phi_n)=\Phi(W_n(X_0,\dots,X_n),W_n(Y_0,\dots,Y_n))$$
    \proof
    The uniqueness is obvious ($W_n(\phi_0,\dots,\phi_{n-1},\phi_n) = W_n(\phi_0,\dots,\phi_{n-1},\phi_n') \Rightarrow p^n\phi_n=p^n\phi_n'$). As for the existence, let $\phi_0 = \Phi(X_0,Y_0)$. Then assume $\phi_0,\dots,\phi_n$ has been found, we only need to prove that $$p^{n+1}|\Phi(W_n(X^p_0,\dots,X^p_n),W_n(Y^p_0,\dots,Y^p_n))-W_n(\phi^p_0,\dots,\phi^p_n)$$ as $W_{n+1}(X_0,\dots,X_{n+1})=W_n(X^p_0,\dots,X^p_n) + p^{n+1}X_{n+1}$. Note that 
    \begin{align*}
       &\Phi(W_n(X^p_0,\dots,X^p_n),W_n(Y^p_0,\dots,Y^p_n))-W_n(\phi^p_0,\dots,\phi^p_n)\\ =& \sum_{k=0}^{n}p^k\phi_k^{p^{n-k}}(X_0^p,\dots,Y_k^p)-\sum_{k=0}^{n}p^k\phi_k^{p^{n+1-k}}(X_0,\dots,Y_k)\\
       =&\sum_{k=0}^{n}p^k(\phi_k^{p^{n-k}}(X_0^p,\dots,Y_k^p)-\phi_k^{p^{n+1-k}}(X_0,\dots,Y_k)).
    \end{align*}
    As $p^{n+1-k}|\phi_k^{p^{n-k}}(X_0^p,\dots,Y_k^p)-\phi_k^{p^{n+1-k}}(X_0,\dots,Y_k)$, one gets the conclusion.
    \Prop
    Applying the Lemma above we denote $S_i$ (resp. $P_i$) the polynomials $\phi_i$ associated to $\Phi(X,Y) = X+Y$ (resp. $\Phi(X,Y) = XY)$. Let $R$ be an arbitrary commutative ring and $A$, $B$ be vectors in $R^{\mathbb{N}}$. Then $A+B := (S_0(A,B),S_1(A,B),\dots)$ and $A\cdot B=(P_0(A,B),P_1(A,B),\dots)$ gives $R^{\mathbb{N}}$ a ring structure. The set $R^{\mathbb{N}}$ with this ring structure is called the ring of Witt vectors of R, denoted by $W(R)$.
    \begin{proof}
    	Consider the map:
    	\begin{align*}
    	\chi:(\mathbb{Z}[X_0,X_1,\cdots])^{\mathbb{N}} & \rightarrow(\mathbb{Z}[X_0,X_1,\cdots])^{\mathbb{N}}\\
    	F:=(F_0(X_0,X_1,\cdots),F_1(X_0,X_1,\cdots),\cdots)&\mapsto(W_0(F_0),W_1(F_0,F_1),\cdots)
    	\end{align*}
    From the Lemma above, $\chi$ is injective and $\chi^{-1}(\chi(F)+\chi(G))$(resp. $\chi^{-1}(\chi(F)\cdot\chi(G))$) exists and is equal to $(S_0(F,G),S_1(F,G),\cdots)$ (resp.$(R_0(F,G),R_1(F,G),\cdots)$). As $((\mathbb{Z}[X_0,X_1,\cdots])^{\mathbb{N}},+,\cdot)$ is a ring with identity $(1,1,1,\dots)$ and zero $(0,0,\dots)$, $W(\mathbb{Z}[X_0,X_1,\cdots])$ is a ring with identity $\chi^{-1}(1,1,\cdots)=(1,0,0\dots)$ and zero $\chi^{-1}(0,0,\dots) =(0,0,\dots)$. Consider any ring homomorphism from $\mathbb{Z}[X_0,X_1,\cdots]$ to any countable generated subring $A\subseteq R$, one gets $W(A)$ is a ring, which implies $W(R)$ is a ring. 
    \end{proof}
    \Rmk
    When we say $X\in W(R)$ for some ring $R$, always denote $X_i$ by the $i$th-coordinate of $X$.
    \Prop \label{Prop.FunctorityofWitt}
    $W: Rings \rightarrow Rings$, $R \mapsto W(R)$ is a natural functor from the category of commutative rings to itself.
    \begin{proof}
    	We only need to check that for any two rings $R$ and $R'$, and any morphism $\varphi\in Hom(R,R')$, the map:
    	$$W(\varphi): W(R) \rightarrow W(R'), X := (X_0,X_1,\dots)\mapsto (\varphi(X_1),\varphi(X_2),\dots)$$
    	is a ring homomorphism. Actually, for any $X,Y\in R$ $$(\varphi(X+Y))_i = \varphi (S_i(X,Y)) = S_i(\varphi(X),\varphi(Y)) = (\varphi(X)+\varphi(Y))_i.$$
    Hence $\varphi$ is additive, similarly, it is also multiplicative. Moreover, $ 0=(0,0,0,\dots)$ and $1 = (1,0,0,\dots)$ are preserved by $W(\varphi)$. Hence $W(\varphi)$ is a homomorphism and $W$ is a functor.
\end{proof} 
    \Lemma\label{Lem.4.5}
    Let $R$ be a ring. Suppose $A,B\in W(R)$ such that $A_i=0$ or $B_i=0$ for any $i\in\mathbb{N}$, then 
    $$A+B = (A_0+B_0,A_1+B_1,A_2+B_2,\dots)$$
    \begin{proof}
    	Consider the ring map:
    	\begin{align*}
    	\varphi: \mathbb{Z}[X_0,X_1,X_2,\dots] & \rightarrow R \\
    	X_i &\mapsto
    	\begin{cases}
    	A_i & B_i = 0\\
    	B_i & B_i \neq 0
    	\end{cases}  	
    	\end{align*}
    	Then let $U,V\in W(\mathbb{Z}[X_0,X_1,\dots])$ such that 
    	$$U_i = \begin{cases}
    	X_i & B_i = 0\\
    	0   & B_i \neq 0
    	\end{cases} \text{and } 
    	V_i = \begin{cases}
    	0 & B_i = 0\\
    	X_i   & B_i \neq 0
    	\end{cases}
    	$$
    	Hence $W(\varphi)(U_i) = A_i$, $W(\varphi)(V_i) = B_i$. And it is trivially that $U+V = (X_0,X_1,\dots)$ by checking their associated Witt polynomials. It follows from Proposition \ref{Prop.FunctorityofWitt} that $$A+B = W(\varphi)(U+V) = (A_0+B_0,A_1+B_1,A_2+B_2,\dots).$$
    \end{proof}
    \Prop\label{Prop.4.6} Let $R$ be a ring of characteristic $p$, for all $A = (a_0,a_1,\dots)\in W(R)$, define $V(A) := (0,a_0,a_1,\dots)$ and $F(A) := (a_0^p,a_1^p,\dots)$. Then $V\cdot F = F\cdot V=p$.
    \begin{proof}
    	For $(X_0,X_1,\dots)\in W(\mathbb{Z}[X_0,X_1,\dots])$, denote $p(X_0,X_1,\dots) := (F_0(X_0,X_1,\dots),F_1(X_0,X_1,\dots),\dots)$. Then $p\cdot A =(F_0(A),F_1(A),\dots)$. We only need to prove $F_n(X_0,X_1,\dots) \equiv X_{n-1}^p (\mathrm{mod} p)$.\\
    	First, $F_0 = pX_0$. Assume the assertion is true for $n<m$, then 
    	$F_k^{p^{m-k}} \equiv X_{k-1}^{p^{m-k+1}}(\mathrm{mod}p^{m-k+1})$. Hence
    	\begin{align*}
    	0=&pW_n(X_0,\dots,X_m) - W_n(F_0,\dots,F_m)\\
    	=& \sum_{k=0}^{m}p^{k+1}X_k^{p^{m-k}} -\sum_{k=0}^{m}p^kF_k^{p^{m-k}}\\
    	\equiv& \sum_{k=0}^{m-1}p^{k+1}X_k^{p^{m-k}} - \sum_{k=0}^{m-2}p^{k+1}X_k^{p^{m-k}} +p^mF_m(\mathrm{mod}p^{m+1}).
    	\end{align*}
    	It follows that $F_m \equiv X_{m-1}^p(\mathrm{mod}p)$. By induction, the assertion holds for general $n$.
    \end{proof}
    Now we recall some notations and results about Perfectoid fields and Tilting in \cite{Scholze{2014}}. 
    \Def
    A perfectoid field is a complete topological field $K$, whose topology is induced by a non-archimedean norm $|\cdot|:K \rightarrow \mathbb{R}_{\geq 0}$ with dense image, such that $|p|<1$, and the Frobenius map $\Phi: \mathcal{O}_K/p\rightarrow\mathcal{O}_K/p$ is surjective. Here $\mathcal{O}_K := \{x\in K||x|\leq 1\}$ be the ring of integers.
    \Def
    Let $K$ be a perfectoid field, denote $\mathcal{O}_{k^\flat}:=\varprojlim_{x\rightarrow x^p}\mathcal{O}_K/p$.
    \Prop
    The map:
    $(\ \ )^{\sharp_n}:\mathcal{O}_{K^{\flat}}\rightarrow \mathcal{O}_K,(\bar{x}_0,\bar{x}_1,\dots)\mapsto \lim_{k\geq n} x_k^{p^{k-n}}$ is multiplicative. 
    \begin{proof}
    	As $x_l^{p^{l-k}} \equiv x_k(\mathrm{mod}p)$ for all $l>k\geq n$. Then $x_l^{p^{l-n}} = (x_l^{p^{l-k}})^{p^{k-n}} \equiv x_k^{p^{k-n}}(\mathrm{mod\ }p^{k-n})$. It follows that $(\ \ )^{\sharp_n}$ is well difined. And the multiplicativity is trivial by the following equation 
    	\begin{align*}
    		((\bar{x}_0,\bar{x}_1,\dots)(\bar{y}_0,\bar{y}_1,\dots))^{\sharp_n}&=\lim_{k\geq n} (x_ky_k)^{p^{k-n}}\\ &=\lim_{k\geq n} x_k^{p^{k-n}}\lim_{k\geq n} y_k^{p^{k-n}}\\
    		&=(\bar{x}_0,\bar{x}_1,\dots)^{\sharp_n}(\bar{y}_0,\bar{y}_1,\dots)^{\sharp_n}
    	\end{align*}
    \end{proof}
    \Prop
    Let $K$ be a perfectoid field, then $\mathcal{O}_{K^\flat}$ is a complete valuation ring with valuation $v(x) = v(x^{\sharp_0})$.
    \begin{proof}
    	As $(\ \ )^{\sharp_0}$ is multiplicative, $v(xy)=v(x^{\sharp_0} y^{\sharp_0}) = v(x)+ v(y)$. And for $n$ large enough, one has $$v(x+y) =v((x+y)^{\sharp_0}) = v((x_n+y_n)^{p^n}) \geq min\{v(x_n^{p^n}),v(y_n^{p^n})\}=min\{v(x),v(y)\}.$$
    	If a sequence $(a_0,a_1,\dots)$ in $\mathcal{O}_{K^\flat}$ converges, for all $k$, $\bar{a}_{n,k}$ will be stable as $$(a_{n,k}-a_{m,k})^{p^k}\equiv (a_m-a_n)^{\sharp_0}(\mathrm{mod}p^{k+1})$$ Hence if denote $x_k := \lim \bar{a}_{n,k}$, then $x := (x_0,x_1,\dots)$ will be the limit. It follows that $\mathcal{O}_{K^\flat}$ is complete. 
    \end{proof}
\Prop
Let $K$ be the completion of the algebraic closure of $K_0$. Then $K$ is a perfectoid field.
\begin{proof}
	It is trivial that $K$ is complete and for all $a\in\mathbb{Q}$, $|p^a| = p^{-a}$ implies the image of the norm is dense. Notice that $\mathcal{O}_K /p = \mathcal{O}_{\bar{K}_0}/p$ and $\mathcal{O}_{\bar{K}_0}\xrightarrow{x\rightarrow x^p}\mathcal{O}_{\bar{K}_0}$ is surjective, then the Frobenius map $\Phi$ is surjective.
\end{proof}
    In the rest of this article, we always denote by $K$ the completion of the algebraic closure of $K_0$ and denote $A_{\text{inf}}:= W(\mathcal{O}_{K^\flat})$.
    \Prop[Fontaine's map] The Fontaine's map 
    \begin{align*}
    	\theta: A_{\text{inf}} &\rightarrow \mathcal{O}_K\\
    	 x=(x_0,x_1,\dots) &\mapsto \sum _{n=0}^\infty p^n (x_n)^{\sharp_n}
    \end{align*}
    
    is a ring homomorphism, and $\mathrm{ker}(\theta)$ is generated by $[p^\flat] -p$. Where $[\ ]$ is the Teichm\"{u}ller representative (i.e. $[\ ] : \mathcal{O}_{K^{\flat}} \rightarrow A_{\mathrm{inf}},a \mapsto (a,0,0,\dots)$) and denote $x^\flat := (x,x^{1/p},\dots)$ for $x\in \mathcal{O}_{K}$. 
    
    \begin{proof}
    	 For all $(x_{i,j})_{i,j\in\mathbb{N}} \in \mathcal{O}_K^{\mathbb{N}^2}$ such that $x_{i,j+1}^p \equiv x_{i,j}(\mathrm{mod\ }p )$ denotes an element $x\in A_{\text{inf}}$ in the following way: $x =(x_0,x_1,\dots)$, where $x_i=(\bar{x_{i,0}},\bar{x_{i,1}},\dots)\in \mathcal{O}_{K^\flat}$.\\
    	 For all $x,y\in A_\infty$, let $z =x+y$. If $x =(x_{i,j})_{i,j\in\mathbb{N}}$ and $y=(y_{i,j})_{i,j\in\mathbb{N}}$, then $$z = (z_{i,j} = S_j(x_{i,0},\dots,x_{i,j},y_{i,0},\dots,y_{i,j}))_{i,j\in\mathbb{N}}$$
    	  Hence for any $n$ $$\theta(z) \equiv \sum_{i=0}^{n}p^iz_{i,n}^{p^{n-i}}=\sum_{i=0}^{n}p^i(x_{i,n}^{p^{n-i}}+y_{i,n}^{p^{n-i}})\equiv \theta(x)+\theta(y)(\mathrm{Mod}p^{n+1})$$
    	It follows that the Fontaine's map is additive. Similarly it is multiplicative.\\
    	For the assertion of $\mathrm{ker}(\theta)$, if $\theta(x) =0$, I claim that there exists $ y=(y_0,y_1,\dots)\in A_{\text{inf}}$ such that the first $n-$coordinates of $([p^\flat]-p)(y_0,\dots,y_n,0,0\dots)$ is equals to $(x_0,x_1,\dots)$. As $\theta(x) =0$, $(x_0)^{\sharp_0} \equiv 0(\mathrm{mod}p)$. It follows that $p^\flat|x_0$. So we may choose $y_0 = x_0/p^\flat$. Now we may assume the assertion holds for $n<m$, then consider $z = x-([p^\flat]-p)(y_0,\dots,y_{m-1},0,0\dots)$. It is easy to see that $z_i=0$ for $i<m$, and $\theta(z) = 0$. Hence $p^{m+1}|p^{m}z_m^{\sharp_m}\Rightarrow p|z_m^{\sharp_m}\Rightarrow p^{p^m}|z_m^{\sharp_0}=(z_m^{\sharp_m})^{p^m}\Rightarrow (p^\flat)^{p^m}|z_m$. Then let $y_m = z_m/(p^\flat)^{p^m}$, the first $m$ coordinates of $x-([p^\flat]-p)(y_0,\dots,y_m,0,0\dots)$ is vanishing. By induction, one gets $x =([p^\flat]-p)(y_0,\dots,y_n,\dots)$.
    \end{proof}
    Now as $\theta([S^\flat]+[T^\flat] -1) =0$, there exists $A\in A_{\text{inf}}$ such that $[S^\flat]+[T^\flat] -1 = A([p^\flat]-p)$. Then the formula for the coordinate transformation comes to be 
    $$S^{1-1/p^n}\mathrm{d}S^{1/p^n}+T^{1-1/p^n}\mathrm{d}T^{1/p^n} =\theta(a)p^{1-1/p^n}\mathrm{d}p^{1/p^n}.$$
    
    \section{The "Differential Version" of Fontaine's Map and the Proof}\label{Sect.Fantain'sMap}
    In this section, we will construct a map called the "differential version" of Fontaine's map to prove the formula in last section.
    \Lemma
    If $R$ is a perfect ring of characteristic $p$ (i.e. $R\xrightarrow{x\mapsto x^p}R$ is isomorphism), then $\Omega_{W(R)/\mathbb{Z}_p}$ is uniquely $p$-divisible (i.e. $\times p$ is an isomorphism).
    \begin{proof}
    Consider the map: 
    \begin{align*}
    \mathrm{d}':W(R) &\rightarrow \Omega_{W(R)/\mathbb{Z}_p}\\
    (x_0,x_1,\dots)&\mapsto [x_0^{1/p}]^{p-1}\mathrm{d}[x_0^{1/p}] +\mathrm{d}(x_1^{1/p},x_2^{1/p},\dots)
    \end{align*}
    for all $x,y\in W(R)$ and $z=x+y$, denote $a :=[x_0^{1/p}]$, $b := [y_0^{1/p}]$ and $c := ([(x_0+y_0)^{1/p}] -[x_0^{1/p}] -[y_0^{1/p}])/p$. A general equation holds:
    $$\mathrm{d}\frac{(a+b+pc)^p-a^p-b^p}{p} = (a+b+pc)^{p-1}\mathrm{d}(a+b+pc)-a^{p-1}\mathrm{d}a-b^{p-1}\mathrm{d}b,$$
    as it is true for $\Omega_{\mathbb{Z}[a,b,c]/\mathbb{Z}}$ (you can times $p$ at both side).
     Hence $$\mathrm{d}\frac{[x_0+y_0]-[x_0]-[y_0]}{p}= [(x_0+y_0)^{1/p}]^{p-1}\mathrm{d}[(x_0+y_0)^{1/p}] - [x_0^{1/p}]^{p-1}\mathrm{d}[x_0^{1/p}]- [y_0^{1/p}]^{p-1}\mathrm{d}[y_0^{1/p}]$$
     Notice that (Using the equation that $(x_0,x_1,\dots) = [x_0] + p(x_1^{1/p},x_2^{1/p},\dots)$ according to Lemma \ref{Lem.4.5} and Proposition \ref{Prop.4.6})
     \begin{align*}
     LHS &= \mathrm{d}\frac{x-[x_0]}{p} +\mathrm{d}\frac{y-[y_0]}{p} -\mathrm{d}\frac{x+y-[x_0+y_0]}{p}\\
     &=
     \mathrm{d}(x_1^{1/p},x_2^{1/p},\dots)+\mathrm{d}(y_1^{1/p},y_2^{1/p},\dots)-\mathrm{d}(z_1^{1/p},z_2^{1/p},\dots).
     \end{align*}
It follows that $\mathrm{d}'$ is additive. On other hand
     \begin{align*}
     \mathrm{d}'(xy) &= \mathrm{d}'{([x_0]+(x-[x_0]))([y_0]+(y-[y_0]))}\\&=
     \mathrm{d}'[x_0y_0]+\mathrm{d}'[x_0](y-[y_0])+\mathrm{d}'(y-[y_0])[x_0]+\mathrm{d}'(x-[x_0])(y-[y_0])\\
     &=[(x_0y_0)^{1/p}]^{p-1}\mathrm{d}[(x_0y_0)^{1/p}]+\mathrm{d}\frac{[x_0](y-[y_0])}{p}+\mathrm{d}\frac{(x-[x_0])[y_0]}{p}+\mathrm{d}\frac{(x-[x_0])(y-[y_0])}{p}\\
     &=\sum_{Sym}\left([x_0][y_0^{1/p}]^{p-1}\mathrm{d}[y_0^{1/p}] +[x_0]\mathrm{d}\frac{y-[y_0]}{p} + \frac{x-[x_0]}{p}\mathrm{d}[y_0] + (x-[x_0])\mathrm{d}\frac{y-[y_0]}{p}\right)\\
     &= x\mathrm{d}'y+y\mathrm{d}'x
     \end{align*}
It follows that $\mathrm{d}'$ is a derivation. Then there exists $\delta\in \mathrm{End}_{W(R)}(\Omega_{W(R)/\mathbb{Z}_p})$ such that $\delta\mathrm{d}=\mathrm{d}'$, by the universal property of K\"{a}hler differential. It is very easy to check that $p\mathrm{d}'=d$. Then $\mathrm{d}'=\delta p \mathrm{d}'\Rightarrow \delta p=\mathrm{id}$. As $p$ commutes with $\delta$, $p\delta =\mathrm{id}$. Hence $\times p$ is an isomorphism.
  \end{proof}
\Cor
Let $R$ be as above, the map $$\sharp:\varprojlim_{\omega\rightarrow p\omega}\Omega_{W(R)/\mathbb{Z}_p}\rightarrow \Omega_{W(R)/\mathbb{Z}_p}, (\omega_0,\omega_1,\dots)\rightarrow \omega_0$$ is an isomorphism.
\begin{proof}
	Let $\delta$ be the inverse morphism of $\times p$ in the proof of last Lemma. Then it is easy to check that $\omega \mapsto (\omega,\delta\omega,\delta^2\omega,\dots)$ is the inverse morphism of $\sharp$.
\end{proof}
\Lemma
$\mathcal{O}_{K^\flat}$ is perfect of charateristic $p$.
\proof 
It is easy to check $(\ )^{1/p}:(\bar{x_0},\bar{x_1},\dots,)\mapsto (\bar{x_1},\bar{x_2},\dots)$ is the inverse morphism of Frobenius map.
     \Def["Differential Version" of Fontaine's Map]
    The following diagram :
    \begin{equation*}
    	\begin{tikzcd}
        A_{\text{inf}} \arrow
        {r}{\theta}&\mathcal{O}_K\\
        \mathbb{Z}_p\arrow {r}\arrow{u}&\mathcal{O}_{K_0}\arrow{u}
        \end{tikzcd}
    \end{equation*}
    commutes for the natural morphisms. Hence the diagram induces a natural morphism between their K\"{a}hler differential $\theta_*:\Omega_{A_{\text{inf}}/\mathbb{Z}_p} \rightarrow \Omega_{\mathcal{O}_K/\mathcal{O}_{K_0}}$. And also for $$\theta_*: \varprojlim_{\omega\mapsto p\omega}\Omega_{A_{\text{inf}}/\mathbb{Z}_p}\rightarrow \varprojlim_{\omega\mapsto p\omega}\Omega_{\mathcal{O}_K/\mathcal{O}_{K_0}}$$
    Then $$\theta_*\sharp^{-1}\mathrm{d}: A_{\inf} \rightarrow\varprojlim_{\omega\mapsto p\omega}\Omega_{\mathcal{O}_K/\mathcal{O}_{K_0}}$$
    is a derivation, denoted by $\mathrm{d}_{\theta}$, which is called the "differential version" of Fontaine's map.
    \Prop
    The "differential version" of Fontaine's map can be discribed in the following way:
    $$\mathrm{d}_\theta = (\mathrm{d}_{\theta,0},\mathrm{d}_{\theta,1},\dots)$$
    where
    \begin{align*}
    \mathrm{d}_{\theta,n} : A_{\text{inf}} &\rightarrow \Omega_{\mathcal{O}_K/\mathcal{O}_{K_0}}\\
    (x_0,x_1,\dots) & \mapsto \sum_{k=0}^{n-1}(x_k^{\sharp_{n-k}})^{p^{n-k}-1}\mathrm{d}x_k^{\sharp_{n-k}}+\mathrm{d}\theta((x_n^{1/p^n},x_{n+1}^{1/p^n},\dots))
    \end{align*}
    In particular, $\mathrm{d}_{\theta,n}([x^\flat]) = (\mathrm{d}x,x^{1-1/p}\mathrm{d}x^{1/p},x^{1-1/p^2}\mathrm{d}x^{1/p^2},\dots)$.
    \begin{proof}
    	If one denotes $$d_n(x_0,x_1,\dots) :=\sum_{k=0}^{n-1} [x_k^{1/p^{n-k}}]^{p^{n-k}-1}\mathrm{d}[x_k^{1/p^{n-k}}] + \mathrm{d}(x_n^{1/p^n},x_{n+1}^{1/p^n},\dots)$$ 
    	it will be easy to check that $d_0(x_0,x_1,\dots) = d(x_0,x_1,\dots)$ and $pd_n(x_0,x_1,\dots)=d_{n-1}(x_0,x_1,\dots)$. Hence $\sharp^{-1}\mathrm{d} = (d_0,d_1,\dots)$. Take the Fontaine's map on each term and notice that $\theta([x^{1/p^m}]) = x^{\sharp_m}$. Then one gets the conclusion.
    \end{proof}
    \Lemma
    The morphsim $\Omega_{\mathcal{O}_{\bar{K}_0}/\mathcal{O}_{K_0}}\otimes\mathcal{O}_K\rightarrow \Omega_{\mathcal{O}_K/\mathcal{O}_{K_0}}$ is injective.
    \begin{proof}
    	It is exactly the Proposition 8.18.
    \end{proof}
    \Prop[Proof of the Formula]\label{Prop.ProofofFormula}
    Let $S+T=1$, $A\in A_{\text{inf}}$ such that $[S^\flat]+[T^\flat]-1=([p^\flat]-p)A$, then for all $n\in\mathbb{N}$
    $$S^{1-1/p^n}\mathrm{d}S^{1/p^n}+T^{1-1/T^n}\mathrm{d}T^{1/p^n} =\theta(A)p^{1-1/p^n}\mathrm{d}p^{1/p^n}$$
    \begin{proof}
    	Use the "differential version" of Fontaine's map: 
    	\begin{align*}
        (S^{1-1/p^n}\mathrm{d}S^{1/p^n}+T^{1-1/T^n}\mathrm{d}T^{1/p^n})_{n\in\mathbb{N}}&=\mathrm{d}_\theta([S^\flat]+[T^\flat]-1)\\
        &=\mathrm{d}_{\theta}([p^\flat]-p)A\\
        &=\theta([p^\flat]-p)\mathrm{d}_{\theta}A+\theta(A)\mathrm{d}_{\theta}([p^\flat]-p)\\
        &=
        (\theta(A)p^{1-1/p^n}\mathrm{d}p^{1/p^n})_{n\in\mathbb{N}}
    	\end{align*}
    	Hence the formula holds in $\Omega_{\mathcal{O}_K/\mathcal{O}_{K_0}}$. By the Lemma above, it is also holds in $\Omega_{\mathcal{O}_{\bar{K}_0}/\mathcal{O}_{K_0}}\otimes\mathcal{O}_K$.
    \end{proof}
    	\Rmk
    	   If we choose a sequence $(a_0,a_1,\dots)$ in $\mathcal{O}_{\bar{K}_0}$ such that $a_n \equiv \theta(A)(\mathrm{mod}p^n)$, then the follow formula:
    	$$S^{1-1/p^n}\mathrm{d}S^{1/p^n}+T^{1-1/T^n}\mathrm{d}T^{1/p^n}=a_np^{1-1/p^n}\mathrm{d}p^{1/p^n}$$ holds in $\Omega_{\mathcal{O}_{\bar{K}_0}/\mathcal{O}_{K_0}}$. It is surprising that one can choose $a_n\in\mathbb{Z}[p^{1/p^{n-1}},T^{1/p^n},S^{1/p^n}]$ while  $\mathbb{Z}[p^{1/p^{n-1}},T^{1/p^n},S^{1/p^n}]$ may be much smaller than $\mathcal{O}_{K_0(p^{1/p^n},T^{1/p^n},S^{1/p^n})}$. We will give the proof of this Proposition in the following section from which one can also construct each $a_n$ in a computable way. 
    
    \section{The Problem about the Coefficients}
    In this section, we fix an element $B\in A_{\text{inf}}$ such that $\theta(B)=0$, and $A = B/([p^\flat]-p)$, $C =pA$. For any element $x\in A_{\text{inf}}$, $x_n$ always denotes the $n-$th coordinate of $x$ (i.e. $x = (x_0,x_1,\dots)$).
    \Def
    For an element $x\in A_{\text{inf}}$, a set $\mathfrak{R} = \{R_0,R_1,R_2,\dots\}$ of subring of $\mathcal{O}_K$ is called a system of coefficient rings associated to $x$ if one can choose $x_{i,j}\in R_j$ for each $i,j\in\mathbb{N}$ such that $x_i^{\sharp_j} \equiv x_{i,j} (\text{mod}\ p)$.
    \Rmk
    Let $\mathfrak{X}$ be a subset of $A_{\text{inf}}$, if $\mathfrak{R}$ is a common system of coefficient rings associated to each $x\in\mathfrak{X}$, so is $\mathfrak{R}$ to any $y\in \mathbb{Z}[\mathfrak{X}]$ (the subring generated by elements in $\mathfrak{X}$). For convenience, we will say that $\mathfrak{R}$ is associated to $x$, to $\mathfrak{X}$ and to $\mathbb{Z}[\mathfrak{X}]$ for short.
    \begin{proof}
    	It follows from $$(x_i+y_i)^{\sharp_j}\equiv S_i(x_{0,j},\dots,x_{i,j};y_{0,j},\dots,y_{i,j})(\text{mod} p)$$
    	$$(x_i\cdot y_i)^{\sharp_j}\equiv P_i(x_{0,j},\dots,x_{i,j};y_{0,j},\dots,y_{i,j})(\text{mod} p)$$
    	for any $x,y\in\mathfrak{X}$, where $S_i$ and $P_i$ are the polynomials associated to addition and multiplication for Witt vectors in Proposition 4.2.
    \end{proof}  
In the following part of this section, fix $\mathfrak{R} = \{R_0,R_1,R_2,\dots\}$ to be associate to $B$. More precisely, fix a  choice of $B_{i,j}\in\mathcal{O}_K$ for each $i,j\in\mathbb{N}$ such that $B_i^{\sharp_j}\equiv B_{i,j}(\text{mod}\ p)$, and then set $R_n :=\mathbb{Z}[B_{0,n},B_{1,n},\dots]$.
    \Def
    Let $m,k,N\in\mathbb{N}$, an element $a\in\mathcal{O}_{K^\flat}$ is called of type $(m,k,N,\mathfrak{R})$ if there exist some elements $f_n\in R_{n+k}[p^{1/p^{n+k-1}}]$ such that for any $n \geq N$, $a^{\sharp_n} \equiv \frac{f_n}{p^{m/p^{n}}}(\text{mod}\ p)$. We will call it of type $(m,k,N)$ for short.
    \Ex
    $A_0$ is of type $(1,1,0)$ as $A_0$ can be expressed as $(A_{0,n} := \frac{B_{0,{n+1}}^p}{p^{1/p^n}})_{n\in\mathbb{N}}$.
    \Rmk
    Here are some observation about the definition:
        \item[1] Assume $x$ is of type $(m,k,N)$, then it is also of type
        $$
        \begin{cases}
        	(m,k,N') & \text{if } N' \geq N\\
        	(m',k,N) & \text{if } (k\geq 1, m'\geq m) \text{ or } (m' \geq m, p|(m'-m))
        \end{cases}
        $$
        \item[2] Assume $x$ is of type $(m,k,N)$, then $x^p$ is of type $(pm,k-1,N+1)$.
        \item[3] Assume $x_0,\dots,x_n\in\mathcal{O}_{K^\flat}$ such that each $x_i$ is of type $(m,k,N)$. Then $\sum_{i=0}^{n}x_i$ is of type $(m,k,N)$.
        \item[4] Assume $x_0,\dots,x_n\in\mathcal{O}_{K^\flat}$ such that each $x_i$ is of type $(m_i,k,N)$. Then $\prod_{i=0}^{n}x_i$ is of type $(m,k,N)$, where $m = \sum_{i=0}^{n}m_i$.
    	\item[5] Assume $(p^\flat)^{p^N}x\in\mathcal{O}_{K^\flat}$ is of type $(m,k,N)$, then $x$ is of type $(p^N+m,k+1,N)$.
    \begin{proof}
    	\item[1] It is trivial.
    	\item[2] For any $n \geq N$ $$ (x^p)^{\sharp_{n+1}} = x^{\sharp_n} \equiv \frac{f_n}{p^{(pm)/p^{n+1}}}(\text{mod}\ p)$$ for some $f_n\in R_{n+k}[p^{1/p^{n+k-1}}]$. Hence $x^p$ is of type $(pm,k-1,N+1)$.
    	\item[3] Assume for any $j\geq N_i$, $x_i^{\sharp_j} \equiv \frac{f_{i,j}}{p^{m/p^j}}(\text{mod}\ p)$ for some $f_{i,j}\in R_{j+k}[p^{1/p^{j+k-1}}]$. Denote $x :=\sum_{i=0}^{n}x_i$, then for $j \geq N$,  
    	$$x^{\sharp_j}\equiv \sum_{i=0}^{n}\frac{f_{i,j}}{p^{m/p^j}}  = \frac{\sum_{i=0}^{n}f_{i,j}}{p^{m/p^j}}(\text{mod}\ p)$$
    	It follows that $x$ is of type $(m,k,N)$.
    	\item[4] Very similar to 2.
    	\item[5] Assume for any $n\geq N$, $((p^\flat)^{p^N}x)^{\sharp_n} = p^{p^{N-n}}x^{\sharp_{n}} \equiv \frac{f_n}{p^{m/p^n}}(\text{mod}\ p)$ for some $f_n\in R_{n+k}[p^{1/p^{j+k-1}}]$.
    	Then $$p^{p^{N-n}}x^{\sharp_n} = (p^{p^{N-n-1}}x^{\sharp_{n+1}})^p\equiv \frac{f_{n+1}^p}{p^{m/p^n}}(\text{mod}\ p^2)$$
    	As $N-n\leq 0$, $x^{\sharp_n} \equiv \frac{f_{n+1}^p}{p^{(p^N+m)/p^n}}(\text{mod}\ p)$. Hence $x$ is of type $(p^N+m,k+1,N)$.
    	
    	    \end{proof}
    \Lemma
    For the polynomial $S_n$ in Proposition 4.2, $S_n(X_0,X_1^p,\dots,X_n^{p^n};Y_0,Y_1,\dots Y_n^{p^n})$ is homogeneous of degree $p^n$. 
    \begin{proof}
    	Consider the equation:
    	$$S_n(X_0,X_1^p,\dots,X_n^{p^n};Y_0,Y_1,\dots Y_n^{p^n}) = X_n^{p^n}+Y_n^{p^n}+\sum_{i=0}^{n-1} p^{i-n}(X_i^{p^n}+Y_i^{p^n}-S_i^{p^{n-i}}(X_0,\dots,Y_i^{p^i}))$$ As 
    	 on the right hand side is homogeneous of degree $p^n$ by induction, $S_n(X_0,X_1^p,\dots,X_n^{p^n};Y_0,Y_1,\dots Y_n^{p^n})$ is homogeneous of degree $p^n$.
    \end{proof}
    \Cor
    For any $n\in\mathbb{N}^*$, $(p^\flat)^{p^n}A_n$ can be expressed as the following form:
    $$C_n+\sum_{i=1}^{n'}a_iB_0^{u_0(i)}\dots B_n^{u_n(i)}C_1^{v_1(i)}\dots C_{n-1}^{v_{n-1}(i)}$$
    for some $n',a_i\in\mathbb{N}$, such that for each $i\in\{1,2,\dots,n'\}$
    $$\sum_{j=0}^{n}p^j(u_j(i)+v_j(i)) = p^n$$ (here we set $v_0,v_n=0$).  
    \begin{proof}
    	As $[p^\flat]A =((p^\flat)A_0,\dots,(p^\flat)^{p^n}A_n,\dots)$ and $[p^\flat]A = B+C$,
    	it follows from Lemma 6.6.
    \end{proof}
    \Prop
    $A_n$ is of type $((n+1)p^n,1,n)$ for each $n\in\mathbb{N}$.
    \begin{proof}
    	As $p^\flat A_0 = B_0$, by observation 5($N = 0$), $A_0$ is of type $(1,1,0)$ while $B_0$ is of type $(0,0,0)$.\\
    	$C_n = A_{n-1}^p$ by Proposition 4.6. Assume the Proposition is true for $n < l$, then by observation 2,  $C_n$ is of type $(np^n,0,n)$ for $n\leq l$.Then for $n=l$, $(p^\flat)^{p^l}A_l$ can be  expressed as the following form:
    	$$C_l+\sum_{i=1}^{n'}a_iB_0^{u_0(i)}\dots B_l^{u_l(i)}C_1^{v_1(i)}\dots C_{l-1}^{v_{l-1}(i)}$$
    	such that for each $i\in\{1,2,\dots,n'\}$,  $\sum_{j=0}^{l}p^j(u_j(i)+v_j(i)) = p^l$. Each monomial is of type $(m_i,0,l)$ where $m_i = \sum_{j=1}^{l}v_j(i)jp^j \leq lp^l$. By observation 1, each monomial is of type $(lp^l,0,l)$, hence $(p^\flat)^{p^l}A_l$ is of type $(lp^l,0,l)$ by observation 3. Hence $A_l$ is of type $((l+1)p^l,1,l)$ by observation 5.
    \end{proof}
    \Def
     An element $a\in \mathcal{O}_{K}$ is called good w.r.t. $\mathfrak{R}$ if one can choose $x_n\in R_{n}[p^{1/p^{n-1}}]$ such that $x \equiv x_n(\text{mod}\ p^n)$ for each $n\in\mathbb{N}^*$. We will also call it good for short. 
     \Lemma
     For any $n,m\in\mathbb{N}$, assume $$(x_1+\dots+x_n)^{p^m} = \sum\limits_{i_1+\dots+i_n=p^m}a_{i_1,\dots,i_n}x_1^{i_1}\dots x_n^{i_n}$$
     Then for any $j\in\{1,\dots,n\}$, $v_p(a_{i_1,\dots,i_n}) + v_p(i_j) \geq m$.
     \begin{proof}
     	Notice that 
     	$$v_p(a!) = \sum_{i=0}^{\infty}[\frac{a}{p^i}]$$
     	then 
     	\begin{align*}
     		v_p(a_{i_1,\dots,i_n}) &= v_p(\frac{p^m!}{i_1!\dots i_n!})\\
     		&= \sum_{i=0}^{m}([\frac{p^m}{p^i}] - [\frac{i_0}{p^i}] -\cdots -[\frac{i_n}{p^i}])\\
     		& \geq m-v_p(i_j)
     	\end{align*}
     	for each $j$, as when $i>v_p(i_j)$, $[\frac{p^m}{p^i}] - [\frac{i_0}{p^i}] -\cdots -[\frac{i_n}{p^i}] \geq 1$. 
     \end{proof}
    \Prop
   $\theta(A)$ is good.
    \begin{proof}
    	As $\theta(A) = \sum_{n=0}^{\infty}p^n(A_n)^{\sharp_n}$, it is enough to prove each $p^n(A_n)^{\sharp_n}$ is good.\\
    	As $A_0^{\sharp_0} = \frac{1}{p}(B_0)^{\sharp_0} = -\sum_{i=1}^{\infty}p^{i-1}B_i^{\sharp_i}$, $A_0^{\sharp_0}$ is good (Notice that each $p^{i-1}(B_i)^{\sharp_i}$ is good).\\
    	Now assume for $n<l$, $p^n(A_n)^{\sharp_n}$ is good. For $n=l$, by Corollary 6.7, one can set $$(p^{\flat})^{p^l}A_l = C_l + \sum_{i=1}^{n'}a_iB_0^{u_0(i)}\dots B_l^{u_l(i)}C_1^{v_1(i)}\dots C_{l-1}^{v_{l-1}(i)}$$
    	such that for each $i$,  $\sum_{j=0}^{l}p^j(u_j(i)+v_j(i)) = p^l$. Now we denote $X_i := a_iB_0^{u_0(i)}\dots B_l^{u_l(i)}C_1^{v_1(i)}\dots C_{l-1}^{v_{l-1}(i)}$. Then for $s > l$, 
    	\begin{align*}
    		(\text{mod}\ p^s)p^l(A_l)^{\sharp_l}
    		&= p^{l-1}((p^\flat)^{p^l}A_l)^{\sharp
    		_l}\\
    	    &=
    	    p^{l-1}(C_l+\sum_{i=1}^{n'}X_i)^{\sharp_l}\\
    	    &\equiv p^{l-1}(C_l^{\sharp_s}+\sum_{i=1}^{n'}X_i^{\sharp_s})^{p^{s-l}}\\
            &= p^{l-1}C_l^{\sharp_l}+\sum\limits_{i_0+\dots+i_{n'} = p^{s-l} \atop
            	i_0 \neq p^{s-l}} b_{i_0,\dots,i_{n'}}(C_l^{i_0})^{\sharp_s}\prod_{j=0}^{l}(B_j^{u_j(i_1,\dots,i_{n'})})^{\sharp_s}\prod_{j=1}^{l-1}(C_j^{v_j(i_1,\dots,i_{n'})})^{\sharp_s}
    	\end{align*}
    	where
    	$$b_{i_0,\dots,i_{n'}} = p^{l-1}\frac{p^{s-l}!}{i_0!\dots i_{n'}!}a_0^{i_0}\dots a_{n'}^{i_{n'}}$$
    	$$u_j(i_1,\dots,i_{n'}) = i_1u_j(1)+\dots+i_{n'}u_j(n')$$
    	$$v_j(i_1,\dots,i_{n'}) = i_1v_j(1)+\dots+i_{n'}v_j(n')$$
    	$p^{l-1}C_l^{\sharp_l} = p^{l-1}A_{l-1}^{\sharp_{l-1}}$ is good by induction. The other monomials is of the form $$b(B_0^{\sharp_s})^{u_0}\dots (B_l^{\sharp_s})^{u_l}(C_1^{\sharp_s})^{v_1}\dots (C_l^{\sharp_s})^{v_l}$$ by the Lemma above, $v_p(b) + v_p(i_j)\geq s-1$ for each $j$, $v_p(b) + v_p(u_j),v_p(b) + v_p(v_j)  \geq s-1$. From
    	$$  (B_j)^{\sharp_s} \equiv B_{j,s}(\text{mod}\ p)$$
    	one gets
        $$
        	 (B_j^{\sharp_s})^{u_j}  \equiv B_{j,s}^{u_j}  (\text{mod}\ p^{s-v_p(b)})
        $$
        as $a^{p^n} \equiv b^{p^n} (\text{mod}\ p^{n+1})$ if $a \equiv b (\text{mod}\ p)$. As $C_j$ is of type $(jp^j,0,j)$, one can choose $C_{j,s}\in R_s[p^{1/{p^{s-1}}}]$ such that 
        $$(C_j)^{\sharp_s} \equiv \frac{C_{j,s}}{p^{jp^{j-s}}}(\text{mod}\ p)$$
    	Then 
    	$$(C_j^{\sharp_s})^{v_j} \equiv \frac{C_{j,s}^{v_j}}{p^{jv_jp^{j-s}}}(\text{mod}\ p^{s-v_p(b)})$$
    	Hence 
    	$$b(B_0^{\sharp_s})^{u_0}\dots (B_l^{\sharp_s})^{u_l}(C_1^{\sharp_s})^{v_1}\dots (C_l^{\sharp_s})^{v_l} \equiv \frac{b}{p^t}(B_{0,s})^{u_0}\dots (B_{l,s})^{u_l}(C_{1,s})^{v_1}\dots (C_{l,s})^{v_l} (\text{mod}\ p^s)$$
    	where $$t = \sum_{j=1}^{l}jv_jp^{j-s}$$
    	Consider the homogeneous condition (by Corallary 6.7, $\sum_{j=0}^{l}(u_j+v_j)p^j =p^s$), one gets $$\sum_{j=1}^{l}v_jp^j \leq p^s$$
    	Hence $$t = \sum_{j=1}^{l}jv_jp^{j-s} \leq l \text{\ and\ } t\leq l-1 \text{\ if\ }v_l=0$$ 
    	If $t\leq l-1$, $b/p^p\in\mathbb{Z}[p^{1/p^s-1}]$. And if $v_l\neq 0$, one also gets $b/p^t\in\mathbb{Z}[p^{1/p^{s-1}}]$ as $p^l|b$.  Hence each monomial is good, which implies $p^l(A_l)^{\sharp_l}$ is good. By induction, $\theta(A)$ is good.
    \end{proof}
	\Cor \label{Cor.Good}
	
	Let $R_n = \mathbb{Z}[T^{1/p^n},S^{1/p^n}]$, and $B = [T^\flat]+[S^{^\flat}] -1$, then $\theta(A)$ is good w.r.t. $(R_0,R_1,\dots)$. In other words, there exists a sequence $(a_1,a_2,\dots)$, where $a_n\in \mathbb{Z}[p^{1/p^{n-1}},T^{1/p^n},S^{1/p^n}]$ such that $$\theta(A) \equiv a_n(\text{mod}\ p^n)$$
	for each $n\in\mathbb{N}^*$
\begin{proof}
	For $p=2$, $-1 = (1,1,1,\dots)$ as $1 + (1,1,1,\dots) = [1]+[1]+2[1]+4[1]+\cdots = 0$. \\
	For $p\neq 2$, $-1 = (-1,0,0,\dots)$, as there is an natural morphism from $p$-type Witt vertors $W(\mathbb{Z})$ to $A_{\text{inf}}$ induced by $\mathbb{Z} \rightarrow \mathbb{Z}/p\mathbb{Z}\rightarrow \mathcal{O}_{K^\flat}$, and it is obvious that $[-1]+[1] =0$ in $W(\mathbb{Z})$.
	Hence $(R_0,R_1,\dots)$ is associated to $\{[T^\flat],[S^\flat],-1\}$, so by remark 6.2 and Proposition 6.11, $\theta(A)$ is good. 
\end{proof}
    \section{Some Computations} \label {Sect.Examples}
    
    In this section, all the differential forms are in $\Omega_{\mathcal{O}_{\bar{K}_0}/\mathcal{O}_{K_0}}$, and $a := ([S^\flat]+[T^\flat]-1)/([p^\flat]-p)$, here $S+T=1$.
    \Ex
    Denote $\alpha_{p^n} = (S^{1/p^n}+T^{1/p^n}-1)/p^{1/p^n}$, then 
    $$S^{1-1/p}\mathrm{d}S^{1/p}+T^{1-1/T}\mathrm{d}T^{1/p}=\alpha^p_pp^{1-1/p}\mathrm{d}p^{1/p}$$
    \begin{proof}
    	Denote $a=(a_0,a_1,\dots)$. As $([S^\flat]+[T^\flat]-1) = ([p^\flat]-p)a$, $a_0 = (S^\flat+T^\flat-1)/p^\flat$. Then $$\theta(a)\equiv a_0^{\sharp_0}\equiv(S^{1/p} +T^{1/p} -1)^p/p=\alpha_p^p (\mathrm{mod}p)  $$
    \end{proof}
    \Ex
    Denote $$\beta_p =\sum_{\mbox{\tiny$
    		\begin{array}{c}
    		i+j+k=p\\
    		i,j,k<p
    		\end{array}
    		$}}\frac{(p-1)!}{i!j!k!}(-1)^iT^{j/p^2}S^{k/p^2}+\delta_{2,p}+\alpha_{p^2}^p$$, then 
       $$S^{1-1/p^2}\mathrm{d}S^{1/p^2}+T^{1-1/T^2}\mathrm{d}T^{1/p^2}=(\alpha^{p^2}_{p^2}+\beta_p^p)p^{1-1/p^2}\mathrm{d}p^{1/p^2}$$
       Here $\delta_{2,p} =1$ if $p=2$, otherwise $=0$.\\ 
       
       In particular, if $p =2$, and denote $B = T^{1/4}S^{1/4}-T^{1/4}-S^{1/4}+1$,then
       $$S^{3/4}\mathrm{d}S^{1/4}+T^{3/4}\mathrm{d}T^{1/4} = (2\alpha_2^2+2\alpha_2B+(2\sqrt{2}+1)B^2) 2^{3/4}\mathrm{d}2^{1/4}$$
    \begin{proof}
    	By direct computation, denote $* :=\{(i,j,k)|i+j+k=p;i,j,k<p\}$
    	, one gets:
    	$$a_1 = \frac{\sum_{*}\frac{(p-1)!}{i!j!k!}
    				(-1)^i(T^\flat)^j(S^\flat)^k+\delta_{2,p}+(\frac{T^\flat+S^\flat-1}{p^\flat})^p}{(p^\flat)^p}$$
        Then 
        \begin{align*}
        	p(a_1)^{\sharp_1} & = ((p^\flat)^pa_1)^{\sharp 1}\\
        	&= (\sum_{*}\frac{(p-1)!}{i!j!k!}
        	(-1)^i(T^\flat)^j(S^\flat)^k+\delta_{2,p}+(\frac{T^\flat+S^\flat-1}{p^\flat})^p)^{\sharp_1}\\
        	&\equiv
        	(\sum_{*}\frac{(p-1)!}{i!j!k!}
        	(-1)^iT^{j/p^2}S^{k/p^2}+\delta_{2,p}+(\frac{T^\flat+S^\flat-1}{p^\flat})^{\sharp_1})^p\\
        	&\equiv (\sum_{*}\frac{(p-1)!}{i!j!k!}
        	(-1)^iT^{j/p^2}S^{k/p^2}+\delta_{2,p}+(\frac{T^{1/p^2}+S^{1/p^2}-1}{p^{1/p^2}})^p)^p\\
        	&= \beta_p^p (\mathrm{mod}p^2).
        \end{align*}    
        Hence 
        $$\theta(a) \equiv (a_0)^{\sharp_0}+p(a_1)^{\sharp_1} \equiv \alpha_{p^2}^{p^2}+\beta_p^p(\mathrm{mod}p^2).$$
        When $p=2$, notice that $\alpha_4^2=\alpha_2+\sqrt{2}B$, then
        \begin{align*}
        \alpha_4^4+\beta_2^2 &=
        \alpha_4^4+(B+\alpha_4^2)^2\\
        & = (\alpha_2+\sqrt{2}B)^2+(\alpha_2+\sqrt{2}B+B)^2\\
        & \equiv 2\alpha_2^2+2\alpha_2B+(2\sqrt{2}+1)B^2(\mathrm{mod}4).
        \end{align*}
        So
        $$S^{3/4}\mathrm{d}S^{1/4}+T^{3/4}\mathrm{d}T^{1/4} = (2\alpha_2^2+2\alpha_2B+(2\sqrt{2}+1)B^2) 2^{3/4}\mathrm{d}2^{1/4}.$$  
\end{proof} 

The Fontaine's map only gives a theoretical way to computer the coefficients, but the explicit expression will be complicate when $n$ is large. Now we try to set $S$ in a more general way and see what we could gain from the "Differrential Version" of Fontaine's Map.
\Ex For all  $s,t,u,v\in \mathbb{Z}$ st $gcd(u,v,p)=1$, let $S = (sT+t)/(uT+v)\in\mathcal{O}_{K_0}$, then there exists $a_{s,t,u,v}\in A_{\text{inf}}$ such that$$[S^\flat](u[T^\flat]+v)-(s[T^\flat]+t)=([p^\flat]-p)a_{s,t,u,v}$$
Then take $\mathrm{d}_\theta$ at both sides, one gets
$$(uT+v)S^{1-1/p^n}\mathrm{d}S^{1/p^n}+(uS-s)T^{1-1/p^n}\mathrm{d}T^{1/p^n} = \theta(a_{s,t,u,v})p^{1-1/p^n}\mathrm{d}p^{1/p^n}$$
\Rmk As $uT+v$ is invertible, $S^{1-1/p^n}\mathrm{d}S^{1/p^n}$ can be expressed as a linear combination of $T^{1-1/p^n}\mathrm{d}T^{1/p^n}$ and $p^{1-1/p^n}\mathrm{d}p^{1/p^n}$, which is also followed from $S^{1-1/p^n}\mathrm{d}S^{1/p^n}$ is a $[p^n]$-torsion and $({1-1/p^n}\mathrm{d}T^{1/p^n},p^{1-1/p^n}\mathrm{d}p^{1/p^n})$ is a basis of $p^n$-torsion part. As $(uS-s)=(ut-vs)/(uT+v)$, $T^{1-1/p^n}\mathrm{d}T^{1/p^n}$ can be expressed as a linear combination of $S^{1-1/p^n}\mathrm{d}S^{1/p^n}$ and $p^{1-1/p^n}\mathrm{d}p^{1/p^n}$ (in other words, $(S^{1-1/p^n}\mathrm{d}S^{1/p^n},p^{1-1/p^n}\mathrm{d}p^{1/p^n})$ is a basis of $p^n$-torsion part) if and only if $gcd(ut-vs,p)=1$, which is equal to say $S$ is not in the image of Frobenius map of $\mathbb{F}_p(T)$. This is exact the condition of Proposition 3.1.
\Ex
Assume $p\neq2$, denote $1^\flat := (1,\xi_p,\xi_{p^2},\dots)$, here $\xi_{p^n}$ is some primitive $p^n$-root of unit such $\xi_{p^n}^p=\xi_{p^{n-1}}$. Then there exists $a_{1^\flat}\in A_{\text{inf}}$ such that 
$$[1^\flat]-1=([p^\flat]-p )a_{1^\flat}$$ 
Take $\mathrm{d}_\theta$ at both side
$$\xi_{p^n}^{p^n-1}\mathrm{d}\xi_{p^n} = \theta(a_{1^\flat})p^{1-1/p^n}\mathrm{d}p^{1/p^n}$$
As $\theta(a_{1^\flat}) \equiv (a_{1^\flat,0})^{\sharp_0}\equiv (1-\xi_p)^p/p(\mathrm{mod}p)$, $v(\theta(a_{1^\flat})) =1/(p-1)$. It follows that $\xi_{p^n}^{p^n-1}\mathrm{d}\xi_{p^n}$ is a basis of $[p^{n-1/(p-1)}]$-torsion part of $\Omega_{\bar{\mathbb{Z}}_p\mathbb{Z}_p}$.
\Ex
Similarly, when $p=2$, denote $(-1)^\flat :=(-1,\xi_4,\xi_8,\dots)$. Then there exists $a_{(-1)^\flat}\in A_{\text{inf}}$ such that 
$$[(-1)^\flat]+1=([p^\flat]-p)a_{(-1)^\flat}$$
Take $\mathrm{d}_\theta$ at both side
$$\xi_{2^{n+1}}^{2^n-1}\mathrm{d}\xi_{2^{n+1}} =\theta(a_{(-1)^\flat})2^{1-1/2^n}\mathrm{d}2^{1/p^n}$$
As $\theta(a_{(-1)^\flat})\equiv(a_{(-1)^\flat})^{\sharp_0}\equiv (\xi_4+1)^2/2=\xi_4(\mathrm{mod}2)$. It follows that $\theta(a_{(-1)^\flat})$ is invertible and $ \xi_{2^{n+1}}^{2^n-1}\mathrm{d}\xi_{2^{n+1}}$ is a basis of $[2^n]$-torsion part of $\Omega_{\bar{\mathbb{Z}}_2/\mathbb{Z}_2}$.
\section{Appendix}
In this section, let $K_0$ be a complete valuation ring of charateristic 0 whose residue field is countable and of characteristic $p$. Let $K'$ be a finite extension of $K_0$ and denote $\bar{K}_0$ as the algebraic closure of $K_0$ and $K$ as the completion of $\bar{K}_0$. For an algebra $A$, denote $D(A)$ the derived category of $A$-modules. The aims of this section are to prove two things: \\
1. $H_1(\mathbb{L}_{\mathcal{O}_{K_0}/\mathcal{O}_{K'}}) = 0$.\\
2. $H_1(\mathbb{L}_{\mathcal{O}_{K}/\mathcal{O}_{\bar{K}}}) = 0$.\\
Here, for a ring map $A\rightarrow B$,  denote $\mathbb{L}_{A/B}$ the cotangent complex, whose precise definition can be refered to \cite[Def 3.2]{stack}.
And in \cite{cotangentcomplex}, there is a fairly explicit determination of $\tau_{\leq 2}\mathbb{L}_{B/A}$ which will be used to do computation in this section. Some of the definitions and Propositions will be listed.

\Def
Let $A\rightarrow B$ be a ring homomorphism. Choose a surjection $e_0:R \rightarrow B$ in $A$-algebras, where $R$ is a polynomial algebra over $A$, and let $I = \ker(e_0)$. Then choose an exact sequence 
$$0\rightarrow U\rightarrow F \xrightarrow{j} I \rightarrow 0$$ of $R$-modules, where $F$ is a free $R$-module, and defined $\varphi: F\otimes F \rightarrow F$ by
$$\varphi(x\otimes y) = j(x)y - j(y)x$$
Let $U_0\subset U$ be the image of $\varphi$, and let $e_1:F/U_0\rightarrow R$ and $e_2:U/U_0\rightarrow F/U_0$ be the induced maps. One will get a exact sequence of $A$-modules:
$$\varepsilon_{B/A}:0\rightarrow U/U_0\xrightarrow{e_2} F/U_0 \xrightarrow{e_1} R \xrightarrow{e_0} B \rightarrow 0$$ 
which is called a {\it free extension of $B$ over $A$}\cite[Def 2.1.3]{cotangentcomplex}.
\Def
Let $A,B$ and $\varepsilon_{B/A}$ be as above, {\it the cotangent complex of $\varepsilon_{B/A}$}\cite[Def 2.1.3]{cotangentcomplex} is defined as the following complex:
$$\mathbb{L}(\varepsilon_{B/A}): 0 \rightarrow U/U_0 \xrightarrow{d_2} F/U_0\otimes_R B\xrightarrow{d_1} \Omega_{R/A}\otimes_R B\rightarrow 0$$
where $d_2$ is induced by $e_2$ and $d_1 = (\mathrm{d}\otimes B)\circ(e_1\otimes B)$.

\Prop
The cotangent complex $\mathbb{L}(\varepsilon_{B/A})$ is independent to the choice of the free extension $\varepsilon_{B/A}$ up to homotopy \cite[2.1.9]{cotangentcomplex}, and there is a canonical map $\mathbb{L}_{B/A}\rightarrow\mathbb{L}(\varepsilon_{B/A})$ in $D(A)$, which induced an isomorphism $\tau_{\leq 2}\mathbb{L}_{B/A}\rightarrow\mathbb{L}(\varepsilon_{B/A}) $ in $D(B)$\cite[Lem 12.3]{stack}.

\Prop
$H_0(\mathbb{L}_{B/A})$ is naturally isomorphic to $\Omega_{B/A}$\cite[Lem 4.5]{stack}.

\Prop
Let $A\rightarrow B\rightarrow C$ be a sequence of ring homomorphisms. Then there is a canonical distinguished triangle 
 $$\mathbb{L}_{C/B}[-1] \rightarrow \mathbb{L}_{B/A}\otimes^{\mathbf{L}}_B C\rightarrow \mathbb{L}_{C/A} \rightarrow \mathbb{L}_{C/B}$$
in $D(C)$\cite[Prop 7.4]{stack}. In particular, the triangle above induced a long exact sequence
$$\cdots\rightarrow H_i(\mathbb{L}_{B/A}\otimes_B^{\mathbf{L}} C)\rightarrow H_i(\mathbb{L}_{C/A}) \rightarrow H_i(\mathbb{L}_{C/B})\rightarrow H_{i-1}(\mathbb{L}_{B/A}\otimes_{B}^{\mathbf{L}} C)\rightarrow\cdots$$
\Lemma

Consider a commutative square 
\begin{equation*}
	\begin{tikzcd}
	A' \arrow{r} & B' \\
	A \arrow{u}\arrow{r} & B \arrow{u} 
	\end{tikzcd}
\end{equation*}
of ring maps. If it induces a quasi-isomorphism $B\otimes^\mathbf{L}_A A' = B'$, then the functoriality map induces an isomorphism \cite[Lem 6.2]{stack} 
$$\mathbb{L}_{B/A}\otimes^{\mathbf{L}}_{B} B'\rightarrow \mathbb{L}_{B'/A'}$$ 

\Lemma
Let $A$ be a ring. For a element $f(X)\in A[X]$ such that $f'(X)$ is not a zero divisor in $A[X]/(f(X))$, then $$H_i(\mathbb{L}_{B/A}) = 0$$
for $i=1,2$, where $B := A[X]/(f(X))$.
\begin{proof}
	According to Definition 8.1, put $R := A[X]$ and $e_0$ be the natural surjection, $I := R(F(X))$, $F := R$ and $U:=0$. Hence the free extension is:
	$$\varepsilon_{B/A}: 0\rightarrow R \xrightarrow{\times f(X)} R \rightarrow B \rightarrow 0$$
	and the associated complex is:
	$$\mathbb{L}(\varepsilon_{B/A}): 0\rightarrow B\xrightarrow{d_1} \Omega_{R/A}\otimes_R B\rightarrow 0$$
	As $\Omega_{R/A}\otimes_R B = B\mathrm{d} X$, and $d_1(1) = \mathrm{d}f(X) = f'(X)\mathrm{d} X$, it follows that $d_1$ is an injection (note that $f'(X) $ is not a zero divisor in $B$). Hence $H_i(\mathbb{L}_{B/A}) = 0$ for $i = 1,2$.
\end{proof}

\Prop
Let $\mathcal{I}$ be a filtered index category and $M$ be a functor from $\mathcal{I}$ to $A$-algebras. If denote $B_i = M(i)$ for $i\in\mathcal{I}$, and $B =\varinjlim_\mathcal{I} B_i$. Then the induced map $$\varinjlim_\mathcal{I} H_n(\mathbb{L}_{B_i/A}) \rightarrow H_n(\mathbb{L}_{B/A})$$
is isomorphic.
\begin{proof}
	Refer to \cite[Lem 3.4]{stack}, set all $A_i$ be $A$. Note that the proof is actually for a filtered system even though the statement in \cite[Lem 3.4]{stack} is for a direct system.
\end{proof}

	\Prop
	Let $K_0$ be a complete discrete valuation field  of charateristic 0, with valuation ring $V$ and residue field $k$. The following assertions hold:
	\item 1. Let $L$ be the colimit of unramified extension of $K_0$, then $H_i(\mathbb{L}_{\mathcal{O}_L/V}) =0$ for $i=1,2$.
	\item 2.Assume k is separably algebraic closed, let $\{K_n\}_{n=1}^\infty$ be a sequence of finite extensions of $K_0$, such that for each $n$, $e(K_n|K_{n-1}) =1$ and $f(K_n|K_{n-1}) =p$. Let $E := \varinjlim K_n$, then $H_i(\mathbb{L}_{\mathcal{O}_E/V}) =0$ for $i=1,2$.
	\item 3.If $k$ is algebraic closed, let $\bar{K}_0$ be the algebraic closure of $K_0$ with valuation ring $\bar{V}$. Then $H_i(\mathbb{L}_{\bar{V}/V}) =0$ for $i=1,2$.  
	\begin{proof}
		For the case 1 and case 3, using Lemma 8.8, we only need to prove the situation for finite unramified and totally ramified extension $K'$ of $K_0$ and then take the colimit. Notice that in both cases, $\mathcal{O}_{K'}$ be expressed as the form:  $V[X]/f(X)$ with $f'(X) \neq 0$ by Lemma 1.6. Hence $H_i(\mathbb{L}_{\mathcal{O}_{K'}/V}) =0$ for $i=1,2$ by Lemma 8.7.\\
		For the case 2, as $[k(K_{n}):k(K_{n-1})] = p$, hence $k(K_{n})=k(K_{n-1})(\alpha_n)$ any $\alpha_n\in k(K_{n})\setminus k(K_{n-1})$. By Lemma 1.6 and Lemma 8.7 again, it follows that $H_i(\mathbb{L}_{\mathcal{O}_{K_n}/\mathcal{O}_{K_{n-1}}}) = 0$ for $i=1,2$. Note the exactness of the following sequence:
		$$\cdots\rightarrow H_i(\mathbb{L}_{\mathcal{O}_{K_{n-1}}/\mathcal{O}_{K}})\otimes_{\mathcal{O}_{K_{n-1}}}\mathcal{O}_{K_n}\rightarrow H_i(\mathbb{L}_{\mathcal{O}_{K_n}/\mathcal{O}_{K}})\rightarrow H_i(\mathbb{L}_{\mathcal{O}_{K_n}/\mathcal{O}_{K_{n-1}}})\rightarrow\cdots $$
		One will get $H_i(\mathbb{L}_{\mathcal{O}_{K_n}/V}) = 0$ for $i=1,2$ by induction.
	\end{proof}
    \Cor
    Let $\bar{K}_0$ be the algebraic closure of $K_0$, then $H_i(\mathbb{L}_{\mathcal{O}_{\bar{K}_0}/\mathcal{O}_{K_0}}) = 0$ for $i=1,2$.
    \begin{proof}
    	Fix an injection $i: K_0 \rightarrow \bar{K}_0$. First step, let $L$ be the filtered colimit of unramified extionsion of $K$, and by the case 1 of Proposition 8.9, it follows that $H_i(\mathbb{L}_{\mathcal{O}_{L}/\mathcal{O}_{K_0}}) = 0$ for $i=1,2$\\
    	Second step, denote $l_0$ the residue field of $L$ and construct $l_n := l_{n-1}(b_n)$ inductively: given a well-order $\{\leq,\bar{l_0}\}$ on $\bar{l_0}$ which is isomorphic to $(\leq,\mathbb{N})$ (note that $\bar{l_0}$ is countable as $l_0$ is countable), and $b_n$ is the smallest elements in $\bar{l_0}\setminus l_{n-1}$ such that is of degree $p$ over $l_{n-1}$. It is obvious that $\varinjlim l_n = \bar{l_0}$. As $l_n/l_{n-1}$ is purely inseparable extension of degree $p$, then for each $n$, there exists $c_{n-1}\in l_{n-1}$ such that $b_n^p =c_{n-1}$. Then construct $L_n$ inductively: choose a lifting $\tilde{c}_{n-1}$ of $c_{n-1}$ in $\mathcal{O}_{L_{n-1}}$ and a $p$-root $\tilde{b}_n\in\mathcal{O}_{\bar{K}}$ of $\tilde{c}_{n-1}$ arbitrarily, and let $L_n := L_{n-1}(\tilde{b}_n)$. Note $e(L_n|L_{n-1})=1$ and $k(L_n)=l_n$, it follows that $H_i(\mathbb{L}_{\mathcal{O}_E/\mathcal{O}_L})=0$ for $i=1,2$ according to the case 2 of Prop 8.9, where $E:= \varinjlim L_n$. \\
    	Third step, notice that $k(E)$ is already algebraic closed, it follows that $H_i(\mathbb{L}_{\mathcal{O}_{\bar{K}_0}/\mathcal{O}_E})=0$ for $i=1,2$ according to the case 3 of Prop 8.9.\\
    	Now we construct a sequence of ring homomophisms: $K_0\rightarrow L \rightarrow E\rightarrow \bar{K}_0$, and for any pair $(A,B)$ adjacent in this sequence, $H_1(\mathbb{L}_{\mathcal{O}_A/\mathcal{O}_B}) = 0 $ for $i=1,2$. One gets the conclusion.
    \end{proof}
    \Cor
    Let $K'/K_0$ be a algebraic extension of complete discrete valuation fields of characteristic 0. Then $H_1(\mathbb{L}_{\mathcal{O}_{K'}/\mathcal{O}_{K_0}}) = 0$
    \begin{proof}
    	Let $\bar{K}_0$ be the algebraic closure of $K'$. From the exactness of the sequence:
    	$$\cdots\rightarrow H_2(\mathbb{L}_{\mathcal{O}_{\bar{K}_0}/\mathcal{O}_{K'}})\rightarrow H_1(\mathbb{L}_{\mathcal{O}_{K'}/\mathcal{O}_{K_0}}\otimes_{\mathcal{O}_K'} \mathcal{O}_{\bar{K}_0}) \rightarrow H_1(\mathbb{L}_{\mathcal{O}_{\bar{K}_0}/\mathcal{O}_{K_0}})\rightarrow \cdots$$
    	one gets $H_1(\mathbb{L}_{\mathcal{O}_{K'}/\mathcal{O}_{K_0}}) = 0$ as $\mathcal{O}_{\bar{K}_0}$ is faithful flat over $\mathcal{O}_{K'}$.
    \end{proof}
    \Def
    Let $S$ be a scheme isomorphic to a scheme $\text{Spec} R$, where $R$ is a complete discrete valuation ring. An {\it S-variety} \cite[2.15]{Altenation} $X$ is an integral and separated scheme, flat and of finite type over $S$. We say that $X$ is {\it strictly semi-stable over $S$} \cite[2.16]{Altenation} if $X$ is locally smooth over the scheme $\text{Spec}R[t_1,\dots,t_r]/(t_1\cdot \dots \cdot t_r - \pi_R)$.
    \Def
    Let $X$ be a Noetherian integral scheme. An {\it alteration}\cite[2.20]{Altenation} $\tilde{X}$ of $X$ is an integral scheme $\tilde{X}$, together with a morphism $\varphi: \tilde{X} \rightarrow X$, which is dominant, proper and such that for some nonempty open $U\subset X$, the morphsim $\varphi^{-1}(U) \rightarrow U$ is finite.
    \Lemma
    Let $L$ be a complete discrete valuation field, and $\text{Spec} R$ be a $\text{Spec} \mathcal{O}_L$-scheme. If it is strictly semi-stable over $\text{Spec} \mathcal{O}_L$, then $$H_1(\mathbb{L}_{R/\mathcal{O}_L}) = 0$$
    \begin{proof}
    	w.l.o.g., we may assume there is a smooth ring map $$A := \mathcal{O}_L[t_1,\dots,t_r]/(t_1\cdot \dots \cdot t_r - \pi_L) \rightarrow R$$
        Hence $H_i(\mathbb{L}_{\mathcal{O}_L[t_1,\dots,t_{r-1}]/\mathcal{O}_L}) = 0 $\cite[Lem 4.7]{stack} and $H_i(\mathbb{L}_{R/A}) = 0$\cite[Lem 9.1]{stack} for $i\geq 1$. So it is enough prove that $H_1(\mathbb{L}_{A/ \mathbb{L}_{\mathcal{O}_L[t_1,\dots,t_{r-1}]}}) = 0 $ which is concluded immediately by Lemma 8.7.     
    \end{proof}
    The following Proposition is a simplified version of \cite[6.5]{Altenation}.
    \Prop(De Jong) If $X$ seperated scheme of finite type over $\mathcal{O}_{K_0}$, then there exists a finite extension $L/K_0$, a $\text{Spec}\mathcal{O}_L$-scheme $\tilde{X}$ which is strictly semi-stable over $\mathcal{O}_L$ and an alteration $ \tilde{X} \rightarrow X$ making the whole diagram
    \begin{equation*}
    	\begin{tikzcd}
\tilde{X} \arrow{r}\arrow{d} &X \arrow{d}\\
\text{Spec} \mathcal{O}_L \arrow{r} & \text{Spec}\mathcal{O}_{K_0}
    	\end{tikzcd}
    \end{equation*}
    commutative.
    \Lemma
    Let $K_0, \bar{K}_0$ and $K$ be as above, then $$H_1(\mathbb{L}_{K/K_0}) =0$$
    \begin{proof}
    	Let $R$ be a finitely generated $\mathcal{O}_{K_0}-$algebra, which is a sub-algebra of $\mathcal{O}_K$. Hence $X = \text{Spec} R$ is a seperated scheme of finite type over $\mathcal{O}_{K_0}$, by De Jong's alteration, we can construct the following commutative square
    	\begin{equation*}
    		\begin{tikzcd}
    		\tilde{X} \arrow{r}{\varphi}\arrow{d} &X \arrow{d}\\
    		\text{Spec} \mathcal{O}_L \arrow{r} & \text{Spec}\mathcal{O}_{K_0}
    		\end{tikzcd}
    	\end{equation*}
    	of ring maps, where $L$ is a finite extension of $K_0$, $\varphi$ is a alteration of $\text{Spec}\mathcal{O}_{K_0}$-scheme, and $\tilde{X}$ is strictly semi-stable over $\mathcal{O}_L$. Let $\text{Spec} R'$ be a open subset of $\tilde{X}$, hence $R'$ is a integral domain ($\tilde{X}$ is integral) and  $Frac(R')$ is finite over $Frac(R)$. It follows that $R'$ is algebraic over $Frac(R)$. Hence there is a morphism $\phi: \text{Spec} K \rightarrow \text{Spec} R' \hookrightarrow \tilde{X} $ such the following diagram 
    	\begin{equation*}
    		\begin{tikzcd}
    		\text{Spec} \mathcal{O}_K \arrow{r}  & X\\
    		\text{Spec} K \arrow{u}\arrow{r}{\phi} & \tilde{X} \arrow{u}{\varphi} 
    		\end{tikzcd}
    	\end{equation*}
    	is commutative. As $\varphi$ is proper, there exists a unique morphism $\text{Spec}\mathcal{O}_K \xrightarrow{\alpha} \tilde{X}$ making the whole diagram commutative. Choose an affine open subset $\text{Spec} \tilde{R}$ containing the image of $\alpha$, one gets a sequence of ring maps
    	$$ R \xrightarrow{\varphi} \tilde{R} \xrightarrow{\alpha} \mathcal{O}_K$$
    	If $\alpha$ is not an injection, choose $a\neq 0\in\ker(\alpha)$. As $a$ is algebraic over $Frac(R)$, then there exists a polynomial $f(X) = a_n X^n + \dots a_0 \in R[X]$ with $a_0 \neq 0$ such that $f(a) = 0$. It follows that $\alpha(a_0) = 0$. It is a contradiction. Hence $\tilde{R}$ is a sub-algebra of $\mathcal{O}_K$.\\
    	We claim that all the sub-algebra $\tilde{R}$ of $\mathcal{O}_K$ which are strictly semi-stable over some $\mathcal{O}_L$ ($L$ is some finite extension of $K_0$) are filtered and the colimit is $K$. Actually, for any $R_1$ and $R_2$ satisfied the condition, $R := R_1R_2$ is a finitely generated $\mathcal{O}_{K_0}$ algebra,  hence one can construct $\tilde{R}$ as above. The second assertion is trivial as any finitely generated $\mathcal{O}_{K_0}$ algebra can be contained in some $\tilde{R}$ satisfied the condition by the proof above.
    	Consider the sequence $\mathcal{O}_{K_0}\rightarrow \mathcal{O}_L \rightarrow \tilde{R}$ of ring maps, by Corollary 8.11, Lemma 8.14, it follows that $$H_1(\mathbb{L}_{\tilde{R}/\mathcal{O}_{K_0}})=0$$ According to Proposition 8.8, one gets $H_1(\mathbb{L}_{\mathcal{O}_K/\mathcal{O}_{K_0}}) = 0$. 
    \end{proof}
    \Lemma
    The morphism $$\times p: \mathbb{L}_{\mathcal{O}_{K}/\mathcal{O}_{\bar{K}_0}} \rightarrow \mathbb{L}_{\mathcal{O}_{K}/\mathcal{O}_{\bar{K}_0}}$$
    is an isomorphism in $D(\mathcal{O}_K)$. In particular, $H_i(\mathbb{L}_{\mathcal{O}_K/\mathcal{O}_{\bar{K}_0}}) $ is uniquely $p$-divisable for all $n\in\mathbb{N}$.
    \begin{proof}
    	It is easy to check the mapping cone is  $$Cone(\times p) = \mathbb{L}_{\mathcal{O}_{K}/\mathcal{O}_{\bar{K}_0}}\otimes^{\mathbf{L}}_{\mathcal{O}_{K}} ({\mathcal{O}_{K}}/p)$$ as $\cdots \rightarrow 0 \rightarrow {\mathcal{O}_{K}} \xrightarrow{\times p} {\mathcal{O}_{K}} \rightarrow 0 \rightarrow \cdots = ({\mathcal{O}_{K}}/p)[0]$ in $D({\mathcal{O}_{K}})$. Note that ${\mathcal{O}_{K}}/p = \mathcal{O}_{K} \otimes_{\mathcal{O}_{\bar{K}_0}}  {\mathcal{O}_{\bar{K}_0}}/p$, by Lemma 8.6, $Cone(\times p)$ is isomorphic to $\mathbb{L}_{(\mathcal{O}_K/p)/(\mathcal{O}_{\bar{K}_0}/p)}$, which is trivially equal to 0 as $(\mathcal{O}_K/p) = (\mathcal{O}_{\bar{K}_0}/p)$. Hence the induced map of $\times p$ on homology groups is isomorphic.  
    \end{proof}
    \Prop
    $H_1(\mathbb{L}_{\mathcal{O}_K/\mathcal{O}_{\bar{K}_0}}) = 0$. In particular, the canonical map
    $$\Omega_{\mathcal{O}_{\bar{K}_0/}/\mathcal{O}_{K_0}}\otimes_{\mathcal{O}_{\bar{K}_0/}} \mathcal{O}_K \rightarrow \Omega_{\mathcal{O}_{K}/\mathcal{O}_{K_0}}$$
    is an injection.
    \begin{proof}
    	As $H_1(\mathbb{L}_{\mathcal{O}_K/\mathcal{O}_{K_0}}) = 0$, the canonical map $H_1(\mathbb{L}_{\mathcal{O}_K/\mathcal{O}_{\bar{K}_0}})\rightarrow \Omega_{\mathcal{O}_{\bar{K}_0}/\mathcal{O}_{K_0}}\otimes_{\mathcal{O}_{\bar{K}_0}} \mathcal{O}_K$ is an injection. Note that any element in $\Omega_{\mathcal{O}_{\bar{K}_0}/\mathcal{O}_{K_0}}\otimes_{\mathcal{O}_{\bar{K}_0}} \mathcal{O}_K$ is $p^n$-torsion for some $n\in\mathbb{N}$. But $\times p^n$ is an isomorphism for $H_1(\mathbb{L}_{\mathcal{O}_K/\mathcal{O}_{\bar{K}_0}})$, hence $H_1(\mathbb{L}_{\mathcal{O}_K/\mathcal{O}_{\bar{K}_0}}) = 0$.
    \end{proof}

\end{document}